\newtheorem{theorem}{Theorem}[section]
\newtheorem{proposition}[theorem]{Proposition}
\newtheorem{prop}[theorem]{Proposition}
\theoremstyle{definition}
\newtheorem{definition}[theorem]{Definition}
\newtheorem{remark}[theorem]{Remark}
\newtheorem{example}[theorem]{Example}
\DeclareMathOperator{\IR}{\mathds{R}}
\DeclareMathOperator{\IZ}{\mathds{Z}}
\DeclareMathOperator{\IC}{\mathds{C}}
\DeclareMathOperator{\R}{\mathds{R}}
\DeclareMathOperator{\Z}{\mathds{Z}}
\DeclareMathOperator{\C}{\mathds{C}}
\DeclareMathOperator{\conv}{\mathrm{conv}}
\DeclareMathOperator{\ohne}{\smallsetminus}
\DeclareMathOperator{\leer}{\varnothing}
\DeclareMathOperator{\Par}{\mathrm{par}}
\DeclareMathOperator{\Span}{\mathrm{span}}
\def\floor#1{\left\lfloor #1 \right\rfloor}
\def\fractional#1{\left\{ #1 \right\}}
\def\ie{i.\,e.\xspace}
\def\eg{e.\,g.\xspace}
\begin{document}
\selectlanguage{english}

\title{A generalization of a theorem of G. K. White}
\author[Victor Batyrev]{Victor Batyrev}
\address{Mathematisches Institut, Universit\"at T\"ubingen, 
Auf der Morgenstelle 10, 72076 T\"ubingen, Germany}
\email{victor.batyrev@uni-tuebingen.de}
\author[Johannes Hofscheier]{Johannes Hofscheier} \address{Mathematisches Institut, Universit\"at T\"ubingen, Auf der Morgenstelle 10, 72076 T\"ubingen, Germany}
\email{johannes.hofscheier@student.uni-tuebingen.de}
\subjclass[2000]{Primary 52B20; Secondary 14B05, 11B68}

\begin{abstract}
An $n$--dimensional simplex $\Delta$ in $\R^n$ is called \emph{empty lattice simplex} if $\Delta\cap\Z^n$ is exactly the set of vertices of $\Delta$. A theorem of White shows that if $n=3$ then any empty lattice simplex $\Delta\subset\R^3$ is isomorphic up to an unimodular affine linear transformation to a lattice tetrahedron whose all vertices have third coordinate $0$ or $1$. In this paper we prove a generalization of this theorem for an arbitrary odd dimension $n=2d-1$ which in some form was conjectured by Seb\H{o} and Borisov. This result implies a classification of all $2d$--dimensional isolated Gorenstein cyclic quotient singularities with minimal $\log$--discrepancy at least $d$.
\end{abstract}
\maketitle
\section{Introduction}
\label{sec:Introduction}

We will be working on the $d$--dimensional real space $\R^d$ equipped with the standard lattice $\Z^d$ and we will denote the standard basis of $\Z^d$ by $e_1,\ldots,e_d$. Let $v_1,\ldots,v_{k+1}$ be affinely independent vectors of $\R^d$. Then the convex set generated by $v_1,\ldots,v_{k+1}$ is a \emph{$k$--dimensional simplex}. If the vectors $v_1,\ldots,v_{k+1}$, which form the vertices of $\Delta$, are contained in $\Z^d$ then we call $\Delta$ a \emph{lattice simplex}. For the following let $\Delta$ be a lattice simplex. If the only lattice points which are contained in $\Delta$, are its vertices then we call $\Delta$ an \emph{empty} lattice simplex. In \cite{White:LatticeTetrahedra} G. K. White poses the general problem to investigate the properties of empty lattice simplices and, if possible, classify them. Of course by \enquote{classify} we mean a classification up to a suitable notion of isomorphism, namely up to affine linear isomorphisms which respect the lattice $\Z^d$. Many mathematicians already worked directly or indirectly on this question among these are \cite{White:LatticeTetrahedra}, \cite{Sebo:EmtpyLatticeSimplices}, \cite{Borisov:QuotSing}. One paper which is of particular interest for us is \cite{White:LatticeTetrahedra}. There G. K. White gives a full classification of empty lattice simplices if the dimension of the ambient space is fixed to $3$.

\begin{theorem}[White]\label{thm:OldWhite}
 Let $\Delta$ be a $3$--dimensional lattice simplex (also called a \emph{lattice tetrahedron}) of $\R^3$. Then the following statements are equivalent
 \begin{enumerate}
  \item $\Delta$ is empty
  \item $\Delta$ is affine unimodular isomorphic to a lattice simplex $\conv(v_1,v_2,v_3,v_4)\subset\R^3$ such that the last coordinate of $v_1,v_2$ is 0 and of $v_3,v_4$ is 1 and such that the edges $\Delta_1=\conv(v_1,v_2),\Delta_2=\conv(v_3,v_4)$ are empty.
 \end{enumerate}
\end{theorem}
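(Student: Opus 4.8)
The plan is to prove the two implications separately. The implication (2)$\,\Rightarrow\,$(1) is elementary: if $\Delta=\conv(v_1,v_2,v_3,v_4)$ is presented in the stated normal form, then every point of $\Delta$ has last coordinate in $[0,1]$, so every lattice point of $\Delta$ has last coordinate $0$ or $1$; since $\Delta$ is a simplex whose vertices have last coordinates $0,0,1,1$, the slices $\Delta\cap\{x_3=0\}$ and $\Delta\cap\{x_3=1\}$ are exactly the faces $\Delta_1=\conv(v_1,v_2)$ and $\Delta_2=\conv(v_3,v_4)$, and since these edges are empty we conclude $\Delta\cap\Z^3=\{v_1,v_2,v_3,v_4\}$.

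For (1)$\,\Rightarrow\,$(2) I would first rephrase emptiness group-theoretically. After an affine unimodular transformation assume $v_1=0$ (emptiness makes each $v_i$ primitive). Set $L=\Z v_2+\Z v_3+\Z v_4\subseteq\Z^3$, of index $q:=|\det(v_2,v_3,v_4)|$, and $G:=\Z^3/L$. The lattice points in the half-open parallelepiped $P=\{\,t_2v_2+t_3v_3+t_4v_4:0\le t_i<1\,\}$ represent the cosets of $G$; write $p_g=\sum_{i=2}^{4}t_i(g)\,v_i\in P$ for the representative of $g\in G$. Unwinding definitions, a lattice point of $\Delta$ other than $0,v_2,v_3,v_4$ is exactly such a $p_g$ with $g\ne0$ and $\sum_i t_i(g)\le 1$, so $\Delta$ is empty if and only if $\sum_{i=2}^{4}t_i(g)>1$ for every $g\in G\setminus\{0\}$. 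Assuming this, note $t_i(g)+t_i(-g)\in\{0,1\}$ (as $p_g+p_{-g}\in L$), and applying the inequality to both $g$ and $-g$ forces $t_i(g)\ne0$ whenever $g\ne0$ (a brief case check, the order-two elements treated separately). Hence each homomorphism $G\to\R/\Z$, $g\mapsto t_i(g)$, is injective, so $G$ is cyclic; write $G=\Z/q\Z$ with generator $g_0$ such that $t_2(g_0)=1/q$, and set $t_3(g_0)=a/q$, $t_4(g_0)=b/q$ with $a,b$ units modulo $q$. By the homomorphism property $t_2(kg_0)=\{k/q\}$, $t_3(kg_0)=\{ka/q\}$, $t_4(kg_0)=\{kb/q\}$, so emptiness now reads $\{k/q\}+\{ka/q\}+\{kb/q\}>1$ for $k=1,\dots,q-1$.

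The key point I would then prove is the arithmetic \emph{Lemma}: those inequalities force $a\equiv-1$, or $b\equiv-1$, or $a+b\equiv0\pmod q$ --- equivalently, some pair among the three original weights sums to $0$ modulo $q$. Granting the Lemma, there is a primitive $u\in(\Z^3)^\ast$ taking the values $0,0,1$ on $v_2,v_3,v_4$ in a suitable order: e.g. if $a+b\equiv0$ take $u(v_2)=0$, $u(v_3)=u(v_4)=1$, which extends to $\Z^3$ precisely because $\tfrac1q(u(v_2)+a\,u(v_3)+b\,u(v_4))=\tfrac{a+b}{q}\in\Z$; the cases $a\equiv-1$, $b\equiv-1$ use the symmetric choices $(-1,-1,0)$, $(-1,0,-1)$ and a translation. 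Then $u$ takes only the two consecutive values $0$ and $1$ on the four vertices $0,v_2,v_3,v_4$, splitting them $2$--$2$; completing $u$ to a $\Z$-basis of $(\Z^3)^\ast$ and using it as the last coordinate puts $\Delta$ into the normal form, with the two \enquote{horizontal} edges equal to the faces $\Delta\cap\{x_3=0\}$ and $\Delta\cap\{x_3=1\}$ of $\Delta$, hence empty.

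The main obstacle is the Lemma. I would prove it by elementary means: the symmetry $g(k)+g(q-k)=3$, where $g(k):=\{k/q\}+\{ka/q\}+\{kb/q\}$, upgrades the hypothesis to $1<g(k)<2$ for all $k\not\equiv0$; the increment $g(k{+}1)-g(k)$ is controlled by three \enquote{carry} indicators (whether $ka\bmod q\ge q-a$, and similarly for $b$ and for $1$); and a careful analysis of where these carries occur --- most cleanly organised as a descent on $q$, with the small cases $q\le 3$ checked by hand --- forces one of the three congruences. With this arithmetic input in hand, the remaining steps above are routine.
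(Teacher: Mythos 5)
Your outline is the classical one and most of it is correct: (2)$\Rightarrow$(1) by slicing along the last coordinate is fine, and in (1)$\Rightarrow$(2) the passage to $G=\Z^3/L$, the proof that $G$ is cyclic (each $g\mapsto t_i(g)$ is an injective homomorphism into $\Q/\Z$), the translation of emptiness into $\fractional{k/q}+\fractional{ka/q}+\fractional{kb/q}>1$ for $k=1,\ldots,q-1$, and the construction of the primitive functional $u$ from a congruence among $1,a,b$ are all sound. This closely parallels the homogenized argument the paper gives for the $(2d-1)$--dimensional generalization (Theorem \ref{thm:GeneralizedWhite}).

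However, there is a genuine gap at the only hard point. The arithmetic Lemma --- that the inequalities $\fractional{k/q}+\fractional{ka/q}+\fractional{kb/q}>1$ for all $k\not\equiv0\pmod q$ force $1+a$, $1+b$ or $a+b$ to be $\equiv0\pmod q$ --- \emph{is} White's theorem; everything else in your proposal is routine reduction. You assert it can be proved \enquote{by elementary means} via \enquote{three carry indicators} and \enquote{a descent on $q$}, but you never specify the descent: to which smaller modulus one passes, why the hypothesis is inherited there, and how the conclusion lifts back. That is exactly where the known elementary proofs (White's original argument, Seb\H{o}'s) do all their work, and it is delicate; a reader could not reconstruct a proof from the sketch. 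Note also that your Lemma is precisely the case $d=2$ of Proposition \ref{prop:BernoulliFunctions}: adding a fourth weight $c\equiv-(1+a+b)\pmod q$ makes the four--term sum of fractional parts an integer, and your inequality combined with the $k\mapsto q-k$ symmetry forces that integer to equal $2$, i.e.\ $\sum_iB_1(ka_i/q)=0$ for all $k$. The paper proves that proposition in Section \ref{sec:BernoulliFunctions} using the non-vanishing of $B_{1,\chi}$ for odd Dirichlet characters, a fact for which, as Washington remarks, no elementary proof is known. So the step you leave as a sketch is the very step that, on the paper's route, consumes a deep analytic input; until your Lemma is actually proved (or properly cited to White), the argument is incomplete.
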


\begin{figure}[ht]
 \includegraphics{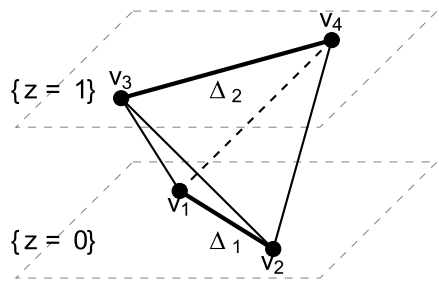}
 \label{fig:Illustration theorem of White}
 \caption{Illustration to the theorem of White.}
\end{figure}

The aim of this paper is to generalize this theorem to arbitrary odd dimensions. We proceed in two steps:
\begin{enumerate}
 \item We need to find a suitable replacement for emptyness which coincide in dimension $3$. As one might guess \enquote{just} emptyness is a far too weak notion or in other words: the family of empty lattice simplices seems to be too large to give an easy classification for.
 \item We need to find a generalization of the construction used in the theorem of White.
\end{enumerate}

For the first problem we will give two equivalent answers, one of which will be in elementary terms while the other one will use the $h^*$--polynomial (see definition below). The second problem will be solved by introducing the notion of the Cayley polytope (see \cite{BatyrevNill:CombAspectsMirrorSymmetry} or \cite{BatyrevNill:LatticePolytopes}).

\begin{definition}[Cayley Polytope]
 Consider $r$ ($r\le d$) lattice polytopes $\Sigma_1,\ldots,\Sigma_r\subset\R^d$ and the cone $\sigma:=\{(\lambda_1,\ldots,\lambda_r,\sum\lambda_i\Delta_i)\subset\R^{d+r}:\lambda_i\ge 0\}$. Then the polytope which arises by intersecting the cone $\sigma$ with the hyperplane $H:=\{(x_1,\ldots,x_{d+r})\in\R^{d+r}:\sum_{i=1}^{r}x_i=1\}$ is called the \emph{Cayley polytope} of $\Sigma_1,\ldots,\Sigma_r$ and is usually denoted by $\Sigma_1*\ldots*\Sigma_r$. It obviously is a lattice polytope.

In other words: $\Sigma_1*\ldots*\Sigma_r$ is the convex hull of the polytopes $e_1\times\Sigma_1,\ldots,e_r\times\Sigma_r$ in $\R^{d+r}$.
\end{definition}

\begin{remark}
We want to mention that the assertion of the second part of theorem \ref{thm:OldWhite} is equivalent to saying that $\Delta$ is isomorphic to the Cayley polytope generated by the empty lattice segments $\Delta_1:=\conv(v_1,v_2)\subset\IR^2$ and $\Delta_2:=\conv(v_3,v_4)\subset\IR^2$, \ie $\Delta\cong\Delta_1*\Delta_2$ (for details see the next section).
\end{remark}

Furthermore we need the following definition.

\begin{definition}
 A $k$--dimensional lattice simplex $\Delta=\conv(v_1,\ldots,v_{k+1})\subset\R^d$ is called a \emph{basic lattice simplex} if one of the following two equivalent conditions are satisfied:
 \begin{enumerate}
  \item There is an affine unimodular isomorphism $\varphi:\R^d\rightarrow\R^d$ such that
   \[
    \varphi(\Delta)=\conv(0,e_1,\ldots,e_k)
   \]
  \item $v_1-v_{k+1},v_2-v_{k+1},\ldots,v_k-v_{k+1}$ is part of a lattice basis for $\Z^d$.
 \end{enumerate}
\end{definition}

\begin{remark}
We want to mention that a $3$--dimensional simplex $\Delta\subset\IR^3$ being empty implies that all its codimension $1$ facets are basic (for details see the next section).
\end{remark}

With all these notions at hand we can state our generalization of the theorem of White in the following form:

\begin{theorem}\label{thm:GeneralizedWhite}
 Let $\Delta=\conv(v_1,\ldots,v_{2d})\subset\R^{2d-1}$ be a $(2d-1)$--dimensional lattice simplex whose codimension $1$ facets are basic lattice simplices. Then the following statements are equivalent:
 \begin{enumerate}
  \item For all $k=1,\ldots,d-1$ there are no lattice points in the interior of $k\Delta$, \ie $\forall k=1,\ldots,d-1$: $\mathrm{Int}(k\Delta)\cap\Z^{2d-1}=\leer$.
  \item $\Delta\cong\Delta_1*\ldots*\Delta_d$ for $1$--dimensional lattice simplices $\Delta_i\subset\R^d$.
 \end{enumerate}
\end{theorem}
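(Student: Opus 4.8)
The implication $(2)\Rightarrow(1)$ is the easy direction and does not use the hypothesis on the facets. The plan is to exploit the fibration of a Cayley polytope: the affine projection $\Sigma_1*\cdots*\Sigma_d\to\conv(e_1,\ldots,e_d)$ onto the ``base'' $(d-1)$--simplex carries $k(\Sigma_1*\cdots*\Sigma_d)$ onto $k\conv(e_1,\ldots,e_d)$, with fibre over a point having barycentric coordinates $(\lambda_1,\ldots,\lambda_d)$ equal to the Minkowski sum $\lambda_1\Sigma_1+\cdots+\lambda_d\Sigma_d$. A lattice point of $\mathrm{Int}(k\Delta)$ projects into $\mathrm{Int}(k\conv(e_1,\ldots,e_d))$, which forces every $\lambda_i\ge 1$ and hence $k=\sum_i\lambda_i\ge d$; thus $\mathrm{Int}(k\Delta)\cap\Z^{2d-1}=\leer$ for $1\le k\le d-1$.

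For $(1)\Rightarrow(2)$ I would pass to the finite group attached to $\Delta$. Set $w_i=(v_i,1)\in\Z^{2d}$, $\Lambda=\Z w_1+\cdots+\Z w_{2d}$ and $G=\Z^{2d}/\Lambda$; then $|G|$ is the normalized volume of $\Delta$, the dual basis of the $w_i$ gives an embedding $G\hookrightarrow(\Q/\Z)^{2d}$, $g\mapsto(q_1(g),\ldots,q_{2d}(g))$, and $h^*_\Delta(t)=\sum_{g\in G}t^{\mathrm{age}(g)}$ with $\mathrm{age}(g)=\sum_{i}\{q_i(g)\}$. The first step is the observation that if even one facet of $\Delta$ is basic then $G$ is cyclic, say $G\cong\Z/m$, and that a facet $F_i$ is basic if and only if the $i$--th coordinate of a generator of $G$ has order $m$; hence \emph{all} facets are basic exactly when a generator has weight vector $(b_1,\ldots,b_{2d})$ with every $b_i\in(\Z/m)^\times$. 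Since $(1)$ is equivalent, by Ehrhart reciprocity, to $\deg h^*_\Delta\le d$, i.e.\ to $\mathrm{age}(kg)\le d$ for all $k$, and since all $b_i$ being units forces the support of $kg$ to be full and therefore $\mathrm{age}(kg)+\mathrm{age}(-kg)=2d$ for $k\not\equiv 0$, one deduces the rigid conclusion $\sum_{i=1}^{2d}\{kb_i/m\}=d$ for every $k=1,\ldots,m-1$.

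The heart of the matter is then the purely arithmetic statement: if $b_1,\ldots,b_{2d}$ are units modulo $m$ with $\sum_{i=1}^{2d}\{kb_i/m\}=d$ for all $k=1,\ldots,m-1$, then the multiset $\{b_1,\ldots,b_{2d}\}$ splits into $d$ pairs each summing to $0$ modulo $m$. I expect this to be the main obstacle. One line of attack is to replace the $b_i$ by the vectors $\big(\cot(\pi k b_i^{-1}/m)\big)_{k=1}^{m-1}$: the hypothesis says precisely that their sum vanishes, and since $b_i\mapsto -b_i$ negates the associated vector, it is enough to prove that the vectors attached to a system of representatives of $(\Z/m)^\times/\{\pm1\}$ are linearly independent -- a statement about the rank of a cotangent (Dedekind--sum) matrix. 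An alternative is an induction on $m$: specializing $k$ to the multiples of $m/p$ shows that the reductions $b_i\bmod p^{v_p(m)}$ satisfy the same condition, so after settling the prime--power case one assembles the pairings via the Chinese Remainder Theorem, the delicate point being the lift of a pairing modulo $p^{a-1}$ to one modulo $p^{a}$.

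Granting the pairing, say $b_{i_l}+b_{j_l}\equiv 0\pmod m$ for $l=1,\ldots,d$ with $\{i_1,j_1,\ldots,i_d,j_d\}=\{1,\ldots,2d\}$, the Cayley decomposition is immediate: for each $l$ the functional $w_{i_l}^*+w_{j_l}^*$ in the dual basis of the $w$'s is integral precisely because $b_{i_l}+b_{j_l}\equiv 0$, and together these define a lattice surjection $\Phi\colon\Z^{2d}\to\Z^{d}$ with $\Phi(w_{i_l})=\Phi(w_{j_l})=e_l$. The induced affine projection $\Delta\to\conv(e_1,\ldots,e_d)$ has fibre $\conv(v_{i_l},v_{j_l})=:\Delta_l$ over the vertex $e_l$, a $1$--dimensional lattice simplex, the general point of $\Delta$ is the corresponding convex combination of points of the $\Delta_l$, and $\Delta\cap\Z^{2d-1}$ is accounted for entirely by the $\Delta_l$; after translating the $\Delta_l$ into a common $\R^d$ this is exactly the assertion $\Delta\cong\Delta_1*\cdots*\Delta_d$. (The case $d=1$ is trivial, the edges $\Delta_l$ are in fact empty as subfaces of basic facets, and the compatibility with the paper's definition of the Cayley polytope is routine.)
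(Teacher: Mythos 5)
Your overall architecture is the same as the paper's: use the basic--facet hypothesis to make $G=\Z^{2d}/\Lambda$ cyclic with all weights $b_i$ units modulo $m$, translate condition (1) via Ehrhart reciprocity and the age pairing into the rigid identity $\sum_{i=1}^{2d}\fractional{kb_i/m}=d$ for all $k\not\equiv0\pmod m$, and derive the Cayley decomposition from a pairing $b_{i_l}+b_{j_l}\equiv0\pmod m$. The direction $(2)\Rightarrow(1)$ and the reductions are fine. But there is a genuine gap exactly where you flag ``the main obstacle'': the arithmetic claim that $\sum_{i=1}^{2d}B_1(kb_i/m)=0$ for all $k$ forces the $b_i$ to split into $d$ pairs summing to $0$ modulo $m$ is asserted, not proved. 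This is Proposition \ref{prop:BernoulliFunctions}, and it is where all the real work lies: the paper proves it by showing that the Stickelberger elements $S(x)=\sum_{g}B_1(gx)\sigma_g$ span a subspace of $\C[G_m]$ of dimension exactly $\varphi(m)/2$ whose orthogonal complement is spanned by the $\sigma_g^*+\sigma_{-g}^*$, and the crucial lower bound on that dimension rests on the nonvanishing $B_{1,\chi}\neq0$ for odd Dirichlet characters (Theorem \ref{thm:IwasawaNonVanish}), a fact equivalent to $L(1,\chi)\neq0$ for which, as Washington notes, no elementary proof is known. Your first proposed attack --- linear independence of the cotangent vectors $\bigl(\cot(\pi k b^{-1}/m)\bigr)_{k}$ over representatives of $(\Z/m)^\times/\{\pm1\}$ --- is, after expanding in odd characters, essentially a restatement of that same nonvanishing, so it cannot be dispatched as a routine rank computation; and the elementary induction on $m$ via CRT is unlikely to close, since the statement is a special case of Borisov's conjecture and resists elementary treatment. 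Until this lemma is supplied, $(1)\Rightarrow(2)$ is not proved.

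A secondary, repairable point: once the pairing is granted you still owe an actual unimodular isomorphism $\Delta\cong\Delta_1*\ldots*\Delta_d$, not merely a lattice surjection $\Phi:\Z^{2d}\to\Z^d$ with the right fibres. The paper does this by exhibiting an explicit lattice basis of $\Z^{2d}$ (namely $(v_1,1),\ldots,(v_{2d-1},1)$ together with a suitable translate of a generator of $G$) and computing the image of $(v_{2d},1)$. Your $\Phi$ can indeed be completed to such an isomorphism, but as written the concluding sentence is a description of what must be checked rather than a verification of it.
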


\begin{remark}
 We want to remark that Seb\H{o} somehow expected in \cite[Conjecture 4.1]{Sebo:EmtpyLatticeSimplices} our generalization of the theorem of White.
\end{remark}

The proof of the generalization of the theorem of White will be based on a result of Morrison and Stevens concerning the \emph{\textit{1}st (periodic) Bernoulli function} which maps a real number $x$ to
\[
 B_1(x)=\begin{cases}
         \fractional{x}-\frac{1}{2}&,x\not\in\IZ\\
         0&,x\in\IZ
        \end{cases}
\]
where $\fractional{x}$ denotes the fractional part of $x$, \ie $\fractional{x}=x-\floor{x}$ where $\floor{x}$ is the biggest integer which is smaller than or equal to $x$. This function naturally arises in the taylor expansion of the function
\[
 \frac{te^{xt}}{e^t-1}=\sum_{n=0}^\infty \tilde B_n(x)\frac{t^n}{n!}
\]
So $\tilde B_1(x)=x-\tfrac{1}{2}$. By making this function periodic with period $1$ by inserting the fractional part of $x$ instead of $x$ and letting $B_1(0)=B_1(1)=0$ one gets our previous definition.

In \cite[Section 1, Corollary 1.3]{MorrisonStevens:TerminalQuotSing} Morrison and Stevens proved the following proposition. Actually they only showed the assertion for $d=4$ but it can be easily generalized to arbitrary $d$.
\begin{proposition}\label{prop:BernoulliFunctions}
 Let $d,n$ be positive integers. Let $a_1,\ldots,a_d$ be integers relatively prime to $n$. If for all $t\in\IZ$
  \[
   \sum_{i=1}^dB_1\left(\frac{ta_i}{n}\right)=0
  \]
 then the integer $d$ is even and after reordering the $a_i$ we have that $a_i+a_{i+1}\equiv0\pmod{n}$ for all $i=1,3,5,\ldots,d-1$.
\end{proposition}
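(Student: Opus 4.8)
The plan is to translate the hypothesis into a statement about Dirichlet characters and then reduce to the classical non\discretionary{}{}{}-vanishing of $L$-values at odd characters. We may assume $n\geq 3$ (so that $-1\not\equiv 1\bmod n$); for $n\leq 2$ the hypothesis is vacuous. Since $B_1(ta_i/n)$ depends only on $ta_i\bmod n$ we take $a_i\in\{1,\ldots,n-1\}$. Two elementary facts will be used repeatedly: $B_1$ is odd, $B_1(-x)=-B_1(x)$; and for every integer $m\geq 2$ and every nontrivial Dirichlet character $\chi$ modulo $m$ one has, from $B_1(u/m)=u/m-\tfrac12$ and $\sum_u\chi(u)=0$,
\[
 B_{1,\chi}:=\sum_{u\in(\Z/m\Z)^{\times}}B_1\!\left(\tfrac{u}{m}\right)\chi(u)=\frac1m\sum_{\substack{1\le u<m\\ \gcd(u,m)=1}}u\,\chi(u),
\]
which up to sign is the generalised Bernoulli number of $\chi$ (equivalently $\pm L(0,\chi)$).

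First I would observe that the hypothesis passes to every divisor $f\mid n$: putting $t=(n/f)t'$ gives $\sum_{i=1}^{d}B_1(t'a_i/f)=0$ for all $f\mid n$ and all $t'\in\Z$, and it is here that one uses the condition for all $t\in\Z$ rather than merely for $t$ coprime to $n$. Now fix an odd Dirichlet character $\psi$ modulo $n$, let $f\geq 3$ be its conductor and $\psi^{\sharp}$ the primitive character modulo $f$ inducing it; then $\psi^{\sharp}$ is again odd. Each $a_i$ is coprime to $f$, so $t'\mapsto t'a_i$ permutes $(\Z/f\Z)^{\times}$. Multiplying the identity $\sum_{i=1}^{d}B_1(t'a_i/f)=0$ by $\overline{\psi^{\sharp}}(t')$, summing over $t'\in(\Z/f\Z)^{\times}$, and substituting $u=t'a_i$ in the $i$-th inner sum, one obtains
\[
 0=\sum_{i=1}^{d}\psi^{\sharp}(a_i)\sum_{u\in(\Z/f\Z)^{\times}}\overline{\psi^{\sharp}}(u)\,B_1\!\left(\tfrac{u}{f}\right)=\Bigl(\sum_{i=1}^{d}\psi(a_i)\Bigr)\,B_{1,\overline{\psi^{\sharp}}}.
\]
Since $\overline{\psi^{\sharp}}$ is odd and primitive, $B_{1,\overline{\psi^{\sharp}}}\neq 0$ (the classical non\discretionary{}{}{}-vanishing: for quadratic $\chi$ it is Dirichlet's class number formula together with $h\geq 1$, and in general the non\discretionary{}{}{}-vanishing of $L(1,\chi)$ underlying Dirichlet's theorem on primes in arithmetic progressions). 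Hence $\sum_{i=1}^{d}\psi(a_i)=0$ for every odd character $\psi$ modulo $n$.

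From here the conclusion is pure finite Fourier analysis on $(\Z/n\Z)^{\times}$. Put $h(r)=\#\{i:a_i\equiv r\bmod n\}$, a function supported on $(\Z/n\Z)^{\times}$ with $\sum_r h(r)=d$. What was just proved says that every Fourier coefficient of $h$ belonging to an odd character vanishes, i.e.\ $h$ lies in the span of the even characters, i.e.\ $h(-r)=h(r)$ for all $r$. Because $n\geq 3$, no unit is its own negative, so $(\Z/n\Z)^{\times}$ is a disjoint union of pairs $\{q,-q\}$, and $h(q)=h(-q)$ lets one match each index $i$ with $a_i\equiv q$ to an index with $a_i\equiv -q$. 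This produces a partition of $\{1,\ldots,d\}$ into pairs $\{i,i'\}$ with $a_i+a_{i'}\equiv 0\bmod n$; in particular $d=2\sum_q h(q)$ is even, and after relabelling so that these pairs are $(1,2),(3,4),\ldots,(d-1,d)$ one gets $a_i+a_{i+1}\equiv 0\bmod n$ for $i=1,3,\ldots,d-1$.

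The hard part is the one non\discretionary{}{}{}-formal ingredient, the non\discretionary{}{}{}-vanishing $B_{1,\chi}\neq 0$ for odd primitive $\chi$; the rest is bookkeeping with characters together with the descent to divisors of $n$. That this descent is essential --- that the condition modulo $n$ alone would not suffice --- is already visible at $n=14$: there $B_1(t/14)=B_1(3t/14)+B_1(5t/14)$ for every $t$ coprime to $14$, so the variant of the statement with ``all $t$'' replaced by ``all $t$ coprime to $n$'' is false, and it is precisely the condition at the conductor $7$ of the offending quadratic character that rules this out.
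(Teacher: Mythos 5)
Your proof is correct and is essentially the argument of the paper: both reduce, via descent to the conductor $f_\chi\mid n$ (which is exactly where the hypothesis for \emph{all} $t\in\Z$ rather than only $t$ coprime to $n$ is used), to the nonvanishing of $B_{1,\chi}$ for odd primitive Dirichlet characters (Theorem~3.2, Iwasawa/Washington), and then conclude that the multiplicity function $r\mapsto\#\{i:a_i\equiv r \bmod n\}$ is invariant under $r\mapsto -r$. The paper merely packages the same computation in the group algebra $\C[G_n]$: your identity $0=\bigl(\sum_i\psi(a_i)\bigr)B_{1,\overline{\psi^\sharp}}$ is its computation that the rescaled idempotents $u_{\chi}=\sum_{k}\chi(k)S(k/f_\chi)$ lie in the span $U$ of the Stickelberger elements, and your Fourier-analytic final step is its Theorem~3.1 that $U^\perp$ is spanned by the elements $\sigma_g^*+\sigma_{-g}^*$.
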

Since Morrison and Stevens did not prove this version we will give a complete proof in section \ref{sec:BernoulliFunctions}.
\begin{remark}
 We want to remark that proposition \ref{prop:BernoulliFunctions} is a special case of a conjecture of Borisov \cite[Conjecture 2]{Borisov:QuotSing}.
\end{remark}

Furthermore we want to mention that this result gives a full classification of isolated $2d$--dimensional cyclic quotient singularities with minimal $\log$--discrepancy greater or equal to $d$.

A cyclic quotient singularity of dimension $d$ is the affine variety obtained by taking the quotient of $\C^d$ by a linear action of the group $\mu_n$ of $n$-th roots of unity. By diagonalizing we can assume that the linear actions is given by
\[
 \mu_n\times\C^d\rightarrow\C^d;\left(\zeta,(x_1,\ldots,x_d)\right)\mapsto\left(\zeta^{a_1}x_1,\ldots,\zeta^{a_d}x_d\right)
\]
for integers $a_i$. We call this quotient singularity of type $(a_1/n,\ldots,a_d/n)$. If $a_i=0$ for some $i=1,\ldots,d$ then the quotient singularity equals $X'\times\C$ for a lower dimensional quotient singularity $X'$. Hence we may assume that $a_i\neq0$ for all $i$. The quotient singularity $\C^d/\mu_n$ of type $(a_1,/n,\ldots,a_d/n)$ has an isolated singularity at the origin if and only if $\gcd(a_i,n)=1$ for all $i=1,\ldots,d$ (see \cite[Corollary 2.2]{MorrisonStevens:TerminalQuotSing}). To a cyclic quotient singularity one associates the minimal $\log$--discrepancy which is a birational invariant. In the cyclic case it has the following simple combinatorial description (see \cite{Borisov:MinDiscrToricSing} or \cite{Reid:DecompoToricMorph}).
\begin{definition}
 Let $\C^d/\mu_n$ be an isolated quotient singularity of type $(a_1/n,\ldots,a_d/n)$. Then the \emph{minimal $\log$--discrepancy} is given by
 \[
  \min\{\sum_{i=1}^d\fractional{\frac{ta_i}{n}}:t=1,\ldots,n-1\}
 \]
\end{definition}
Furthermore a quotient singularity $\C^d/\mu_n$ is called \emph{Gorenstein} if the image of the morphism $\mu_n\rightarrow\mathrm{GL}(n;\C)$ induced by the linear action of $\mu_n$ on $\C^d$ is contained in $\mathrm{SL}(n;\C)$. This again has a purely combinatorial description.
\begin{prop}
 Let $X=\C^d/\mu_n$ be a cyclic quotient singularity of type $(a_1/n,\ldots,a_d/n)$. Then $X$ is Gorenstein if and only if
 \[
  \sum_i\frac{a_i}{n}\in\Z
 \]
\end{prop}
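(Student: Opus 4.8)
The plan is to reduce the statement to a one-line determinant computation. First I would fix once and for all a primitive $n$-th root of unity $\zeta$, so that $\mu_n=\langle\zeta\rangle$ and the linear action of $\mu_n$ on $\C^d$ is completely determined by the single matrix
\[
 g:=\mathrm{diag}\left(\zeta^{a_1},\ldots,\zeta^{a_d}\right)\in\mathrm{GL}(d;\C),
\]
namely $g=\rho(\zeta)$, where $\rho\colon\mu_n\to\mathrm{GL}(d;\C)$ denotes the homomorphism induced by the action. Since $\mu_n$ is cyclic, its image $\rho(\mu_n)$ is exactly the cyclic subgroup $\{g^k:k\in\Z\}$ generated by $g$.

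Next I would use that $\det$ is a group homomorphism, so $\det(g^k)=(\det g)^k$ for all $k$. Hence $\rho(\mu_n)\subset\mathrm{SL}(d;\C)$ holds if and only if $\det g=1$: the "only if" direction is immediate by taking $k=1$, and the "if" direction follows since then every power $g^k$ has determinant $1$ as well. Therefore, by the definition of Gorenstein recalled above, $X$ is Gorenstein if and only if $\det g=1$.

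Finally I would compute $\det g=\zeta^{a_1+\cdots+a_d}$ and observe that, since $\zeta$ has exact order $n$, this equals $1$ precisely when $n\mid(a_1+\cdots+a_d)$, which is the same as $\sum_{i}a_i/n\in\Z$. Chaining the two equivalences gives the claim. There is essentially no hard step here; the only point needing a moment's care is that $\rho$ need not be injective (this happens exactly when $\gcd(a_1,\ldots,a_d,n)>1$), but that is harmless, because the image $\rho(\mu_n)$ is determined by $g$ alone and the argument above only uses $g$ and its powers.
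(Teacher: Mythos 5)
Your proof is correct: the image of $\mu_n$ under the induced representation is the cyclic group generated by $g=\mathrm{diag}(\zeta^{a_1},\ldots,\zeta^{a_d})$, so containment in $\mathrm{SL}(d;\C)$ is equivalent to $\det g=\zeta^{a_1+\cdots+a_d}=1$, i.e.\ to $n\mid\sum_i a_i$. The paper states this proposition without proof, treating it as standard, and your determinant computation is exactly the argument one would supply; the closing remark that non-injectivity of $\rho$ is harmless is a correct (if unnecessary) precaution.
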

Then we can prove the following theorem.
\begin{theorem}\label{thm: Cyclic Quotient Singularities}
 Let $\C^{2d}/\mu_n$ be an isolated Gorenstein cyclic quotient singularity of the type $(a_1/n,\ldots,a_{2d}/n)$. Then the following two statements are equivalent:
 \begin{enumerate}
  \item the minimal $\log$--discrepancy of $(a_1/n,\ldots,a_{2d}/n)\ge d$
  \item after reordering the $a_i$, it is $a_{2i-1}=-a_{2i}$ for all $i=1,\ldots,d$
 \end{enumerate}
\end{theorem}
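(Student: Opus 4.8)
The plan is to convert both statements into assertions about the sums $\sum_{j=1}^{2d}\fractional{ta_j/n}$ and then to read off the nontrivial implication from Proposition~\ref{prop:BernoulliFunctions}.

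\emph{From (2) to (1).} Assuming $a_{2i-1}=-a_{2i}$ after reordering, I would fix $t\in\{1,\ldots,n-1\}$ and use that the singularity is isolated, so $\gcd(a_j,n)=1$ and hence $ta_{2i-1}/n\notin\IZ$. The elementary identity $\fractional{x}+\fractional{-x}=1$ for $x\notin\IZ$ then gives
\[
 \fractional{\frac{ta_{2i-1}}{n}}+\fractional{\frac{ta_{2i}}{n}}=\fractional{\frac{ta_{2i-1}}{n}}+\fractional{-\frac{ta_{2i-1}}{n}}=1
\]
for every $i$, so $\sum_{j=1}^{2d}\fractional{ta_j/n}=d$ for all such $t$; in particular the minimal $\log$--discrepancy equals $d$, hence is $\ge d$.

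\emph{From (1) to (2).} Here I would first rewrite fractional parts in terms of the Bernoulli function. Since $\gcd(a_j,n)=1$, for $t\in\{1,\ldots,n-1\}$ no $ta_j/n$ is an integer, so $\fractional{ta_j/n}=B_1(ta_j/n)+\tfrac12$ and therefore
\[
 \sum_{j=1}^{2d}\fractional{\frac{ta_j}{n}}=\sum_{j=1}^{2d}B_1\!\left(\frac{ta_j}{n}\right)+d .
\]
Thus (1) says precisely that $\sum_{j=1}^{2d}B_1(ta_j/n)\ge 0$ for all $t$ (and the sum vanishes when $n\mid t$). The key trick is to replace $t$ by $n-t$: using that $B_1$ is $1$--periodic and odd, $\sum_j B_1((n-t)a_j/n)=-\sum_j B_1(ta_j/n)$, and since this too is $\ge 0$ we conclude
\[
 \sum_{j=1}^{2d}B_1\!\left(\frac{ta_j}{n}\right)=0\qquad\text{for all }t\in\IZ .
\]
Now Proposition~\ref{prop:BernoulliFunctions}, applied to the $2d$ integers $a_1,\ldots,a_{2d}$ (all coprime to $n$), gives after reordering $a_i+a_{i+1}\equiv 0\pmod n$ for $i=1,3,\ldots,2d-1$, \ie $a_{2i-1}\equiv -a_{2i}\pmod n$; since the type only depends on the $a_j$ modulo $n$, replacing $a_{2i}$ by $-a_{2i-1}$ yields (2).

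\emph{Main obstacle.} All the real work sits in Proposition~\ref{prop:BernoulliFunctions}, whose proof is deferred to Section~\ref{sec:BernoulliFunctions}; granting it, this theorem is a short computation whose only subtle point is the symmetry $t\mapsto n-t$ that upgrades the inequality in (1) to the equality needed to invoke the proposition. I would also remark that the Gorenstein hypothesis is in fact not needed for the equivalence: if $a_{2i-1}=-a_{2i}$ then $\sum_j a_j=0\in n\IZ$, so condition (2) already forces the singularity to be Gorenstein.
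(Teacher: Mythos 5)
Your proposal is correct and follows essentially the same route as the paper: the easy direction via $\fractional{x}+\fractional{-x}=1$, and the converse by using the $t\mapsto n-t$ symmetry to upgrade the inequality $\sum_j\fractional{ta_j/n}\ge d$ to the equality $\sum_j B_1(ta_j/n)=0$ and then invoking Proposition~\ref{prop:BernoulliFunctions}. Your closing remark that the Gorenstein hypothesis is superfluous also matches the remark the paper makes after the theorem.
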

This is an easy consequence of proposition \ref{prop:BernoulliFunctions}.
\begin{remark}
 In theorem \ref{thm: Cyclic Quotient Singularities} we can omit the assumption that the cyclic quotient singularity has to be Gorenstein since minimal $\log$--discrepancy$\ge d$ and $a_{2i-1}=-a_{2i}$ for all $i=1,\ldots,d$ already imply that $X$ is Gorenstein.
\end{remark}
\section{Lattice simplices}
\label{sec:LatticeSimplices}

In this section we are going to thoroughly introduce and discuss the notions we sketched in the introduction.

Let us begin with illustrating why the Cayley polytope generalizes the construction given in the theorem of White. We start by reformulating this construction: Let $\Delta=\conv(v_1,v_2,v_3,v_4)\subset\R^3$ be a $3$--dimensional lattice simplex such that $v_1,v_2\in\{z=0\}$ and $v_3,v_4\in\{z=1\}$ and $\conv(v_1,v_2),\conv(v_3,v_4)\subset\R^3$ are empty $1$--dimensional lattice simplices. Since $\conv(v_1,v_2)\subset\{z=0\}\subset\R^3$ is empty, there exists a point $w\in\{z=0\}$ such that $\{v_2-v_1,w\}$ is a basis for $\{z=0\}\cap\Z^3$ (see  \cite[Corollary 21.2]{Gruber:ConvexGeometry}). Since the third coordinate of $v_3-v_1$ equals $1$, $\{v_2-v_1,w,v_3-v_1\}$ is a lattice basis for $\Z^3$. Now we consider the lattice simplex $\Delta':=\conv((v_1,1),\ldots,(v_4,1))\subset\R^4$. Obviously $\{(v_2,1)-(v_1,1),(w,0),(v_3,1)-(v_1,1),(v_1,1)\}$ is a lattice basis for $\Z^4$. Let $\varphi:\R^4\rightarrow\R^4$ be the unimodular linear isomorphism which maps this basis to $\{0\times e_1,0\times e_2,(e_2-e_1)\times 0, e_1\times 0\}$. Since the third coordinate of $v_4-v_1$ is $1$ we get that $v_4-v_1=a(v_2-v_1)+nw+(v_3-v_1)$ for integers $a,n$. Then $(v_4,1)=a(v_2-v_1,0)+n(w,0)+(v_3-v_1,0)+(v_1,1)$, \ie $\varphi(v_4,1)=e_2\times(a,n)$. Thus $\Delta\cong\varphi(\Delta')=\Delta_1*\Delta_2$ is a Cayley polytope for $\Delta_1:=\conv(e_1\times 0,e_1\times e_1),\Delta_2:=\conv(e_2\times 0, e_2\times(a,n))\subset\R^2$.

If $n$ is a negative integer, then we can take $(-w,0)$ instead of $(w,0)$. So without any loss of generality we can assume that $n$ is non negative. Since the empty lattice simplex $\Delta$ is $3$--dimensional, the integer $n$ must be nonzero. Furthermore  either $a=0,n=1$ or the integer $a$ is coprime to the integer $n$. Let us assume by contradiction that there exists a common divisor $k>1$ of $a$ and $n$. Then the point
\[
 \frac{k-1}{k}v_3+\frac{1}{k}v_4\in\Delta_2\cap\Z^3
\]
is not a vertex of $\Delta$ but contained in the lattice $\Z^3$. This contradicts the emptyness of $\Delta$.

So we have seen that every $3$--dimensional empty lattice simplex $\Delta$ is isomorphic to a Cayley polytope $\Delta_1*\Delta_2$ where $\Delta_1=\conv(e_1\times0,e_1\times e_1)$ and $\Delta=\conv(e_2\times0,e_2\times(a,n))$ for integers $a,n$ with $n>0$ satisfying $(1)_{\Z}=(a,n)_{\Z}$ where $(a,n)_{\Z}$ ($(1)_{\Z}$) denotes the ideal in $\Z$ generated by $a,n\in\Z$ ($1\in\Z$). Conversely any Cayley polytope $\Sigma_1*\Sigma_2$ for two linear independent $1$--dimensional empty lattice simplices $\Sigma_i\subset\R^2$ is isomorphic to $\conv(0\times\Sigma_1,e_1\times\Sigma_2)$, \ie is of the type described in the theorem of White. Hence we can restate the theorem of White as follows.

\begin{theorem}\label{thm:White new version with Cayley polytope}
 Let $\Delta$ be a $3$--dimensional lattice simplex of $\Z^3\subset\R^3$. Then the following statements are equivalent:
 \begin{enumerate}
  \item $\Delta$ is empty
  \item $\Delta$ is isomorphic to a Cayley polytope $\Sigma_1*\Sigma_2$ of two affine linear independent empty lattice segments $\Sigma_1,\Sigma_2\subset\R^2$.
 \end{enumerate}
\end{theorem}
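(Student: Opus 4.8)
The plan is to obtain Theorem~\ref{thm:White new version with Cayley polytope} as a direct corollary of White's original Theorem~\ref{thm:OldWhite}: the two statements differ only in the language used to describe the normal form, and the dictionary between ``two heights with empty edges'' and ``Cayley polytope of empty segments'' has already been set up, in both directions, in the discussion preceding the statement. So I would simply assemble those pieces into two implications.

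For $(1)\Rightarrow(2)$: if $\Delta\subset\R^3$ is empty, Theorem~\ref{thm:OldWhite} lets me assume, after an affine unimodular isomorphism, that $\Delta=\conv(v_1,v_2,v_3,v_4)$ with $v_1,v_2\in\{z=0\}$, $v_3,v_4\in\{z=1\}$, and with the edges $\Delta_1=\conv(v_1,v_2)$ and $\Delta_2=\conv(v_3,v_4)$ empty. The computation carried out above then applies word for word: passing to $\Delta'=\conv((v_1,1),\dots,(v_4,1))\subset\R^4$, choosing $w\in\{z=0\}$ with $\{v_2-v_1,w\}$ a lattice basis of $\{z=0\}\cap\Z^3$, noting that $\{(v_2-v_1,0),(w,0),(v_3-v_1,0),(v_1,1)\}$ is a lattice basis of $\Z^4$, and applying the unimodular isomorphism $\varphi$ that carries it to the standard Cayley frame, one reads off from $v_4-v_1=a(v_2-v_1)+nw+(v_3-v_1)$ that $\varphi(\Delta')$ is a Cayley polytope of two lattice segments, namely the images of $\Delta_1$ and $\Delta_2$. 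These two segments are empty, being unimodular images of the empty edges $\Delta_1,\Delta_2$, and they are affinely independent because $\dim\Delta=3$ --- if their direction vectors were proportional, $v_4$ would lie in the affine hull of $v_1,v_2,v_3$. This yields $(2)$.

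For $(2)\Rightarrow(1)$: if $\Delta\cong\Sigma_1*\Sigma_2$ with $\Sigma_1,\Sigma_2\subset\R^2$ affinely independent empty lattice segments, I use the identification $\Sigma_1*\Sigma_2\cong\conv(0\times\Sigma_1,\,e_1\times\Sigma_2)\subset\R\times\R^2$ recorded above. In this model the two vertices of $\Sigma_1$ sit at height $0$ and the two of $\Sigma_2$ at height $1$; the bottom and top edges are $\Sigma_1$ and $\Sigma_2$ and hence empty; and affine independence of $\Sigma_1,\Sigma_2$ forces the four vertices to be affinely independent, so this is a $3$--dimensional lattice tetrahedron of exactly the shape appearing in Theorem~\ref{thm:OldWhite}(2). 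The implication $(2)\Rightarrow(1)$ of Theorem~\ref{thm:OldWhite} then gives that $\Delta$ is empty.

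I do not anticipate any real obstacle: the theorem is a genuine restatement. The only thing to be checked with a little care is the bookkeeping that every operation used --- affine unimodular maps, and the lift $x\mapsto(x,1)\colon\R^3\to\R^4$, a lattice-preserving affine isomorphism onto its image --- preserves the unimodular type, the property of being empty, and the dimension, so that the two sides of the dictionary really do match. The substantive input, that a $3$--dimensional empty lattice simplex admits a representation with a pair of empty, complementary edges, is exactly what Theorem~\ref{thm:OldWhite} provides, and is the statement that the higher-dimensional Theorem~\ref{thm:GeneralizedWhite} will later have to establish by entirely different means.
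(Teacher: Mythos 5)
Your proposal is correct and follows essentially the same route as the paper: the paper also obtains this theorem as a restatement of Theorem~\ref{thm:OldWhite}, using exactly the lift to $\R^4$, the lattice basis $\{(v_2-v_1,0),(w,0),(v_3-v_1,0),(v_1,1)\}$ and the unimodular map $\varphi$ for $(1)\Rightarrow(2)$, and the identification $\Sigma_1*\Sigma_2\cong\conv(0\times\Sigma_1,e_1\times\Sigma_2)$ for $(2)\Rightarrow(1)$. The only cosmetic difference is that the paper verifies emptiness of the second segment via the coprimality condition $(a,n)_{\Z}=(1)_{\Z}$, whereas you observe directly that it is a unimodular image of an empty edge; both are fine.
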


To be more precisely we have shown that any $3$--dimensional empty lattice simplex $\Delta\subset\R^3$ is isomorphic to one of the lattice simplices of the following more general example.

\begin{example}\label{exm:representativeOfIsomorphismClasses}
 Consider the empty lattice polytopes $\Delta_i:=\conv(0,e_i)\subset\R^d$ for $i=1,\ldots,d-1$ and $\Delta_d:=\conv(0,(a_1,\ldots,a_{d-1},n))\subset\R^d$ where $a_1,\ldots,a_{d-1},n$ are integers with $n>0$ such that $(a_1\cdots a_{d-1},n)_{\Z}=(1)_{\Z}$. Then we will be interested in the Cayley polytope $\Delta_1*\ldots*\Delta_d$ of the affine lattice plane $H:=\{(x_1,\ldots,x_{2d})\in\R^{2d}:\sum_{i=1}^dx_i=1\}$ which we want to denote by $\Delta(a_1,\ldots,a_{d-1},n)$.
\end{example}

We will show that any lattice simplex of our generalization of the theorem of White is isomorphic to one of the lattice simplices $\Delta(a_1,\ldots,a_{d-1},n)$ of the previous example.

Next we introduce the following algebraic quantity one associates to a lattice simplex (see \cite{BeckRobins:ComputingTheContinuousDiscretely}).

\begin{definition}[$h^*$--polynomial]
 Suppose $\Delta$ is a $d$--dimensional lattice simplex of $\Z^d$ with vertices $v_1,v_2,\ldots,v_{d+1}$, and let $w_j=(v_j,1)$. We denote by $\left|k\Delta\cap\Z^d\right|$ the number of lattice points contained in the $k$\textit{th} multiple of $\Delta$. Then the Ehrhart--series is a rational function
 \[
  1+\sum_{k\ge1}\left|k\Delta\cap\Z^d\right|t^k=\frac{h_0^*+h_1^*t+\ldots+h_d^*t^d}{(1-t)^{d+1}}
 \]
 where $h_k^*$ equals the number of lattice points in the \emph{parallelepiped}
 \[
  \Par(\Delta):=\{\lambda_1w_1+\lambda_2w_2+\ldots+\lambda_{d+1}w_{d+1}:0\le\lambda_1,\lambda_2,\ldots,\lambda_{d+1}<1\}
 \]
 with the last coordinate equal to $k$. The polynomial $\sum h_i^*t^i$ we call the \emph{$h^*$--polynomial} of $\Delta$ and denote it by $h_\Delta^*$.
\end{definition}

Let us determine the $h^*$--polynomial of the lattice simplices $\Delta$ appearing in the theorem of White. In order to determine the coefficients $h_i^*$ of $h_\Delta^*$ we can proceed as follows: Put the simplex $\Delta$ on the hyperplane $\{x_4=1\}\subset\Z^4$, \ie consider $\widetilde{\Delta}:=\conv((v_1,1),\ldots,(v_4,1))$ where $v_1,\ldots,v_4$ are the vertices of $\Delta$. Then the number of lattice points lying in the intersection of the hyperplane $\{x_4=k\}\subset\Z^4$ and the parallelepiped $\Par(\Delta)$ is equal to the $k$\textit{th} coefficient $h_k^*$ of $h_\Delta^*$ (see figure \ref{Illustration Parallelepiped h-Polynomial}).

\begin{figure}[ht]
 \includegraphics{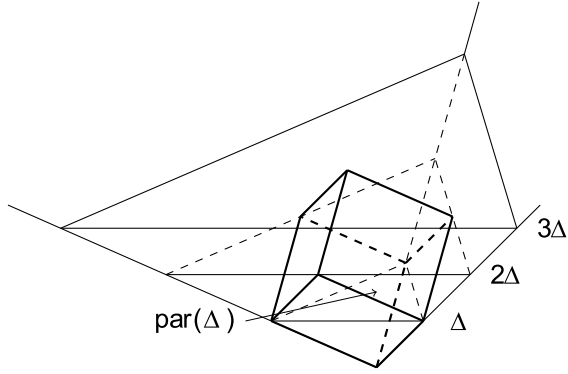}
 \label{Illustration Parallelepiped h-Polynomial}
 \caption{Illustration to the $h^*$--polynomial to a lattice simplex $\Delta$.}
\end{figure}

Thus $h_0^*=1$ and $h_1^*=0$ ($\Delta$ is empty) and $h_2^*$ is a non negative integer. In order to determine $h_3^*$ we make the following important observation: the intersection of $\Par(\Delta)$ with the hyperplane $\{x_4=3\}$ is empty. Assume by contradiction that there exists such a point, say $\lambda_1(v_1,1)+\ldots+\lambda_4(v_4,1)$. Then
\[
 (1-\lambda_1)(v_1,1)+\ldots+(1-\lambda_4)(v_4,1)
\]
is a lattice point of $\Par(\Delta)$ lying on the hyperplane $\{x_4=1\}$. This contradicts the emptyness of $\Delta$. Thus $h_3^*=0$. We have computed that $h_\Delta^*=1+h_2^*t^2$. This leads us to the following proposition.

\begin{prop}\label{prop: h-Polynomial vs lattice free simplex}
 Let $\Delta\subset\R^{2d-1}$ be a $(2d-1)$--dimensional lattice simplex. Then the following assertions are equivalent:
 \begin{enumerate}
  \item $\forall k=1,\ldots,d-1$, $k\Delta\cap\Z^{2d-1}\subset\Z v_1+\ldots+\Z v_{2d}$
  \item $\forall k=1,\ldots,d-1$, $\mathrm{Int}(k\Delta)\cap\Z^{2d-1}=\leer$ and all codimension 1 facets of $\Delta$ are basic lattice simplices
  \item $h^*_\Delta=1+(n-1)t^d$ for an integer $n>0$.
 \end{enumerate}
\end{prop}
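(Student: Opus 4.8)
The plan is to prove the equivalences by showing (2)$\Leftrightarrow$(1) directly and (1)$\Leftrightarrow$(3) via the combinatorial interpretation of the $h^*$--coefficients as lattice points in $\Par(\Delta)$ at a fixed last height. Throughout I work with the lifted simplex $\widetilde\Delta=\conv(w_1,\dots,w_{2d})\subset\R^{2d}$ where $w_j=(v_j,1)$, so that $h^*_k$ counts lattice points $\sum\lambda_j w_j$ with all $\lambda_j\in[0,1)$ and last coordinate equal to $k$; equivalently, it counts lattice points in $k\Delta$ that are "not already accounted for at lower dilates" in the precise Ehrhart sense. The key symmetry I will use repeatedly is the involution $(\lambda_1,\dots,\lambda_{2d})\mapsto(\{1-\lambda_1\},\dots,\{1-\lambda_{2d}\})$ on $\Par(\widetilde\Delta)\cap\Z^{2d}$, which sends a lattice point at height $k$ (with, say, $m$ of the $\lambda_j$ equal to $0$) to a lattice point at height $2d-k-(2d-m)=m-k$... more carefully, it sends height $k$ to height $2d - k$ when no $\lambda_j$ vanishes and shifts down by the number of vanishing coordinates otherwise; this is exactly the mechanism used in the text to deduce $h^*_3=0$ from $h^*_1=0$ in the White case.

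First I would establish (1)$\Leftrightarrow$(3). Assuming (1), I claim $h^*_k=0$ for $k=1,\dots,d-1$. Indeed a lattice point of $\Par(\widetilde\Delta)$ at height $k\le d-1$ projects to a lattice point $x\in k\Delta$; by (1), $x=\sum_j c_j v_j$ with $c_j\in\Z$, $\sum c_j=k$, so $\widetilde x=\sum c_j w_j$; subtracting this from the original representation $\sum\lambda_j w_j$ shows $\lambda_j\in\Z\cap[0,1)=\{0\}$ for all $j$, so the point is a vertex-combination already at height $0$ — contradiction unless $k=0$. Hence $h^*_\Delta=1+h^*_d t^d+\dots+h^*_{2d-1}t^{2d-1}$. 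To kill the coefficients above $d$, apply the involution: a nonzero $h^*_k$ with $k>d$ forces, after the involution, a nonzero lattice point in $\Par(\widetilde\Delta)$ at height $2d-k<d$ (or lower), contradicting what we just proved — one must check the bookkeeping of vanishing $\lambda_j$'s, but since a genuine parallelepiped point at height $k$ that is not a vertex has strictly more than $2d-k$ nonvanishing coordinates only in degenerate cases, the image height stays in $\{1,\dots,d-1\}$ and we reach a contradiction. Thus $h^*_\Delta=1+h^*_d t^d$, and writing $n:=h^*_d+1$ gives (3); note $n>0$ since $h^*_d\ge 0$, and the leading Ehrhart coefficient / normalized volume considerations force the top coefficient pattern. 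Conversely, (3) immediately gives $h^*_k=0$ for $1\le k\le d-1$, and unwinding the parallelepiped description shows every lattice point of $k\Delta$ for such $k$ lies in $\Z v_1+\dots+\Z v_{2d}$, which is (1).

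Next, (1)$\Leftrightarrow$(2). For (1)$\Rightarrow$(2): if $x\in\mathrm{Int}(k\Delta)\cap\Z^{2d-1}$ with $1\le k\le d-1$, then $x=\sum c_j v_j$ with $c_j\in\Z_{>0}$ (strict, by interiority) and $\sum c_j=k\le d-1<2d$, impossible since a sum of $2d$ positive integers is at least $2d$; so the interiors are empty. For the facets: a codimension-$1$ facet $F=\conv(v_1,\dots,\widehat{v_i},\dots,v_{2d})$ is a $(2d-2)$-simplex, and I claim (1) forces the lattice points of $(d-1)F\subset(d-1)\Delta$ — more efficiently, one argues that the normalized volume of $F$ equals $1$: using (1) one shows the lattice $\Z^{2d-1}\cap\aff(F)$ is generated by the edge vectors $v_j-v_{k+1}$ issuing from one vertex, by the same "subtract an integral vertex-combination" trick applied inside the affine span of $F$, which is precisely condition (2) of the definition of a basic simplex. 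For (2)$\Rightarrow$(1): this is the substantive direction. Here I expect the main obstacle. Given a lattice point $x\in k\Delta\cap\Z^{2d-1}$, $1\le k\le d-1$, write $x=\sum\lambda_j v_j$ with $\lambda_j\ge 0$, $\sum\lambda_j=k$. If some $\lambda_j=0$, $x$ lies in a facet, which is basic, hence $x$ is an integral combination of that facet's vertices and we are done; so assume all $\lambda_j>0$. The point $y:=\sum(1-\lambda_j)... $ — rather, consider $\widetilde x=\sum\lambda_j w_j\in\Par(\widetilde\Delta)$ at height $k$, apply the involution to land at height $2d-k\ge d+1$; if the image is a vertex-combination we can trace back, and if not we must push it further into a facet. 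The crux is to show that one cannot have a genuinely "new" lattice point at height between $d+1$ and $2d-1$; equivalently, that $h^*_k=0$ for $k\ge d+1$ using only the basicness of facets plus emptiness of the interiors of dilates. I would handle this by a descending induction on the number of nonvanishing $\lambda_j$: facet-basicness is exactly the base case (one vanishing coordinate), and the involution together with the interior-emptiness of $k\Delta$ for $k\le d-1$ drives the induction. Making this induction airtight — in particular controlling how the involution interacts with which coordinates vanish — is the delicate point, and is where I would spend the bulk of the argument.
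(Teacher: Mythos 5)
Your overall machinery is the right one and matches the paper's: interpret $h^*_k$ as the number of lattice points of the half--open parallelepiped $\Par(\widetilde\Delta)$ at height $k$, and use the involution $\lambda_j\mapsto\fractional{1-\lambda_j}$ to rule out heights in $\{d+1,\dots,2d-1\}$ once heights $1,\dots,d-1$ are excluded. (Your worry about the ``bookkeeping'' of vanishing coordinates is unnecessary: the image height equals $\#\{j:\lambda_j\neq 0\}-k$, which is automatically $\ge 1$ because each nonzero $\lambda_j<1$ forces $\#\{j:\lambda_j\neq0\}>\sum_j\lambda_j=k$, and is $\le 2d-k\le d-1$ when $k\ge d+1$.) However, there is one genuine gap and one point you must make explicit. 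The gap is the direction (2)$\Rightarrow$(1), which you yourself flag as ``the crux'' and leave as an unfinished descending induction. In fact this direction needs no induction and no control of heights above $d$: write $x\in k\Delta\cap\Z^{2d-1}$ in barycentric form $x=\sum_j\lambda_j v_j$ with $\lambda_j\ge0$, $\sum_j\lambda_j=k\le d-1$. If all $\lambda_j>0$ then $x\in\mathrm{Int}(k\Delta)$, which is empty by hypothesis --- contradiction. Hence some $\lambda_j=0$, so $x$ lies in the $k$--th dilate of a facet, and that facet being basic makes $x$ an integral combination of its vertices. This is exactly the dichotomy (``$\lambda_i=0$ for all $i$ or $\lambda_i>0$ for all $i$'') that the paper uses in its step (2)$\Rightarrow$(3); your proposed route through $h^*_k=0$ for $k\ge d+1$ attacks the wrong difficulty.

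The point you must make explicit: every time you invoke (1) you write ``$x=\sum_j c_jv_j$ with $c_j\in\Z$, $\sum_j c_j=k$,'' but the condition as stated only gives $x\in\Z v_1+\dots+\Z v_{2d}$, with no control on $\sum_j c_j$; and without that control the subtraction argument (and the interiority argument ``$c_j\in\Z_{>0}$ summing to $k<2d$'') collapses, since $(x,k)$ and $(x,\sum_jc_j)$ are different points of $\R^{2d}$ and cannot be compared coefficientwise. The distinction matters: for $d=2$ the simplex $\conv(e_1,e_2,e_3,-e_1-e_2-e_3)$ satisfies $\Delta\cap\Z^3\subset\Z v_1+\dots+\Z v_4=\Z^3$ trivially, yet contains $0$ in its interior and has $h^*=1+t+t^2+t^3$. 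So (1) must be read (as the paper tacitly does) as $(x,k)\in\Z(v_1,1)+\dots+\Z(v_{2d},1)$, i.e.\ integral coefficients summing to $k$; with that reading your (1)$\Rightarrow$(3) subtraction is correct, and your sketch that (1) forces the facets to be basic can be completed by the same fractional--part--plus--involution mechanism, which you gesture at but do not carry out.
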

\begin{proof}
 Let $\Delta=\conv(v_1,\ldots,v_{2d})\subset\R^{2d-1}$ be a $(2d-1)$--dimensional lattice simplex such that for all $k=1,\ldots,d-1$, $k\Delta\cap\Z^{2d-1}\subset\Z v_1+\ldots+\Z v_{2d}$. We claim that $\{\sum_{i\neq j}\lambda_i v_i:0\le\lambda_i<1\}\cap\Z^{2d-1}=\{0\}$ for all $j=1,\ldots,{2d}$. Assume that $\sum_{i\neq j}\lambda_i v_i\in\Z^{2d-1}$ for $0\le \lambda_i<1$. Then by assumption $\sum_{i\neq j}\lambda_i=0$ or $\sum_{i\neq j}\lambda_i=k\ge d$. The second case is not possible since
 \begin{align*}
  &\sum_{i\neq j}\lambda_i v_i\in\Z^{2d-1}\text{ with }\sum_{i\neq j}\lambda_i\ge d\\
  \Rightarrow&\sum_{i\neq j}\{1-\lambda_i\}v_i\in\Z^{2d-1}\text{ with }\sum_{i\neq j}\{1-\lambda_i\}\le 2d-1-\sum_{i\neq j}\lambda_i<d
 \end{align*}
 Thus the assertion is proved. By \cite[Corollary 21.2]{Gruber:ConvexGeometry} it follows that the facet of $\Delta$ which is generated by $v_1,\ldots,v_{j-1},v_{j+1},\ldots,v_{2d}$ for $j=1,\ldots,2d$ is a basic lattice simplex. Since $\mathrm{Int}(k\Delta)=\{\sum_i\lambda_iv_i:0<\lambda_i,\sum_i\lambda_i=k\}$ it obviously follows by assumption that $\mathrm{Int}(k\Delta)\cap\Z^{2d-1}=\leer$ for all $k=1,\ldots,d-1$.

 Now let $\Delta=\conv(v_1,\ldots,v_{2d})\subset\R^{2d-1}$ be a $(2d-1)$--dimensional lattice simplex whose comdimension $1$ facets are basic lattice simplices such that $\mathrm{Int}(k\Delta)\cap\Z^{2d-1}=\leer$ for all $k=1,\ldots,d-1$. We consider $\Par(\Delta)=\{\sum_i\lambda_i(v_i,1):0\le\lambda_i<1\}$. Then the k\textit{th} coefficient of $h_\Delta^*$ equals to the number of lattice points contained in $\Par(\Delta)\cap\{x_{2d}=k\}$. Let $\sum_i\lambda_i(v_i,1)\in\Par(\Delta)\cap\{x_{2d}=k\}$, \ie $0\le \lambda_i<1$ with $\sum_i\lambda_i=k$. Since the codimension $1$ facets of $\Delta$ are basic lattice simplices, it follows that $\lambda_i=0$ for all $i$ or $\lambda_i>0$ for all $i$. Let $k>0$. then all $\lambda_i>0$ for all $i$, \ie $\sum_i\lambda_i\in\mathrm{Int}(k\Delta)$. Hence $k\ge d$. We claim that $k\le d$ as well. Indeed assume that $k>d$. Then $\sum_i\{1-\lambda_i\}v_i\in\Z^{2d-1}$ with $\sum_i\{1-\lambda_i\}\le 2d-\sum_i\lambda_i<d$ contradicting the assumption. Hence we have seen that all coefficients of $h_\Delta^*$ are $0$ except for the $0$\textit{th} and the d\textit{th}, \ie $h_\Delta^*=1+(n-1)t^d$ for an integer $n>0$.

Finally let $\Delta=\conv(v_1,\ldots,v_{2d})\subset\R^{2d-1}$ be a $(2d-1)$--dimensional lattice simplex with $h_\Delta^*=1+(n-1)t^d$ for an integer $n>0$, \ie $\Par(\Delta)\cap\{x_{2d}=k\}=\leer$ for all $k=1,\ldots,d-1$. Let $\sum_i\lambda_iv_i\in\Z^{2d-1}$ for $0\le\lambda_i$ with $\sum_i\lambda_i=k\in\{1,\ldots,d-1\}$. Then $\sum_i\{\lambda_i\}v_i\in\Z^{2d-1}$ with $0\le\{\lambda_i\}<1$ and $0\le l:=\sum_i\{\lambda_i\}\le\sum_i\lambda_i=k\le d-1$, \ie $\sum_i\{\lambda_i\}v_i\in\Par(\Delta)\cap\{x_{2d}=l\}$ for an integer $0\le l\le d-1$. The case $l>0$ is not possible by assumption. Thus $l=0$. Then $\{\lambda_i\}=0$ for all $i$, \ie $\lambda_i\in\Z$ for all $i$.
\end{proof}

Obviously theorem \ref{thm:GeneralizedWhite} and proposition \ref{prop: h-Polynomial vs lattice free simplex} imply the following version of a generalization of theorem \ref{thm:OldWhite}.

\begin{theorem}
 Let $\Delta\subset\R^{2d-1}$ be a $(2d-1)$--dimensional lattice simplex whose codimension $1$ facets are basic lattice simplices. Then the following statements are equivalent:
 \begin{enumerate}
  \item $\forall k=1,\ldots,d-1$, $k\Delta\cap\Z^{2d-1}\subset\Z v_1+\ldots+\Z v_{2d}$.
  \item $\forall k=1,\ldots,d-1$, $\mathrm{Int}(k\Delta)\cap\Z^{2d-1}=\leer$
  \item $h_\Delta^*=1+(n-1)t^d$ for an integer $n\ge1$.
  \item $\Delta$ is isomorphic to one of the lattice simplices $\Delta(a_1,\ldots,a_{d-1},n)$ of example \ref{exm:representativeOfIsomorphismClasses}.
  \item $\Delta\cong\Delta_1*\ldots*\Delta_d$ for $1$--dimensional lattice simplices $\Delta_i\subset\R^d$.
 \end{enumerate}
\end{theorem}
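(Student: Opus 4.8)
The plan is to bolt Proposition~\ref{prop: h-Polynomial vs lattice free simplex} and Theorem~\ref{thm:GeneralizedWhite} together and then splice condition~(4) into the chain they produce. Proposition~\ref{prop: h-Polynomial vs lattice free simplex} already states that (1), (3), and the conjunction of (2) with \emph{all codimension-$1$ facets of $\Delta$ basic} are equivalent; since that last clause is precisely the standing hypothesis here, this hands us (1)~$\Leftrightarrow$~(2)~$\Leftrightarrow$~(3) at once. Theorem~\ref{thm:GeneralizedWhite}, whose hypothesis is again that every codimension-$1$ facet of $\Delta$ is basic, gives (2)~$\Leftrightarrow$~(5). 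So the only real task is to hook (4) onto this chain, for which it suffices to prove (4)~$\Rightarrow$~(5) and (5)~$\Rightarrow$~(4). The implication (4)~$\Rightarrow$~(5) is immediate, since $\Delta(a_1,\ldots,a_{d-1},n)$ is by construction the Cayley polytope of the $d$ one-dimensional lattice simplices $\conv(0,e_1),\ldots,\conv(0,e_{d-1}),\conv(0,(a_1,\ldots,a_{d-1},n))\subset\R^d$ (with $n>0$ making the last one genuinely one-dimensional).

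The content is in (5)~$\Rightarrow$~(4): normalising an arbitrary Cayley presentation into the form of Example~\ref{exm:representativeOfIsomorphismClasses}. Write $\Delta\cong\Delta':=\Delta_1*\cdots*\Delta_d\subset\R^{2d}$. Since $\Delta'\cong\Delta$ is $(2d-1)$-dimensional, its $2d$ vertices are affinely independent, and it inherits from $\Delta$ the property that every codimension-$1$ facet is basic. First I would use that translating one factor $\Delta_i$ by a lattice vector $t\in\Z^d$ is realised by the unimodular shear $(x,y)\mapsto(x,\,y+x_it)$ of $\R^{2d}$, which preserves the hyperplane $\sum_ix_i=1$; this reduces us to $\Delta_i=\conv(0,u_i)$ with $u_i\in\Z^d\setminus\{0\}$, whereupon affine independence of the $2d$ vertices $\{e_i\times 0, e_i\times u_i\}_{i=1}^{d}$ amounts to linear independence of $u_1,\ldots,u_d$ over $\R$. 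Next I would examine the facet $F_d$ obtained by deleting the vertex $e_d\times u_d$: its being a basic simplex translates, after quotienting out the first-factor directions $(e_i-e_1)\times 0$, into the assertion that $u_1,\ldots,u_{d-1}$ extend to a $\Z$-basis of $\Z^d$, so a unimodular change of coordinates in the second copy of $\R^d$ normalises $u_i=e_i$ for $i<d$ and $u_d=(a_1,\ldots,a_{d-1},n)$ with $n\neq 0$ (by linear independence). Applying the same reasoning to the facet $F_j$ deleting $e_j\times u_j$, for each $j=1,\ldots,d-1$, forces the image $(a_j,n)$ of $u_d$ in the relevant rank-$2$ quotient to be a primitive vector, \ie $\gcd(a_j,n)=1$; hence $(a_1\cdots a_{d-1},n)_{\Z}=(1)_{\Z}$. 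Finally, if $n<0$ I would replace $\Delta_d$ by $\conv(0,-u_d)=\conv(0,u_d)-u_d$, again a lattice translate of a single factor, to arrange $n>0$. This exhibits $\Delta\cong\Delta(a_1,\ldots,a_{d-1},n)$, which is (4).

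The delicate point is the bookkeeping in this last step. One has to be scrupulous about the lattice with respect to which \emph{basic} is understood: here it is $\Lambda:=\Z^{2d}\cap\{\sum_ix_i=0\}$, with $\Z$-basis $\{(e_i-e_1)\times 0\}_{i=2}^{d}\cup\{0\times e_k\}_{k=1}^{d}$, and one must check that modulo the directions $(e_i-e_1)\times 0$ the basic-ness of $F_j$ is exactly the statement that $\{u_i:i\neq j\}$ forms a $\Z$-basis of a saturated sublattice of the second $\Z^d$ --- equivalently, in the normalised coordinates, that the vector $(a_j,n)$ is primitive. Once this dictionary is in place everything reduces to routine integral linear algebra, and the remaining implications, (4)~$\Rightarrow$~(5) and the whole of (1)--(3),(5), are either trivial or already quoted.
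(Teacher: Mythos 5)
Your proposal is correct, and for the equivalences (1)$\Leftrightarrow$(2)$\Leftrightarrow$(3) and (2)$\Leftrightarrow$(5) it is exactly the paper's route: the paper dismisses the whole theorem with the single remark that it follows from Proposition~\ref{prop: h-Polynomial vs lattice free simplex} and Theorem~\ref{thm:GeneralizedWhite}. Where you genuinely diverge is in how condition (4) is attached. Taken literally, the statements of those two results say nothing about $\Delta(a_1,\ldots,a_{d-1},n)$; the paper's justification for (4) is implicit in the \emph{proof} of the implication (1)$\Rightarrow$(2) of Theorem~\ref{thm:GeneralizedWhite}, which does not merely produce some Cayley polytope but lands precisely in the normal form $\Delta_i=\conv(0,e_i)$ for $i<d$ and $\Delta_d=\conv\bigl(0,(a_1,a_3,\ldots,a_{2d-3},n)\bigr)$ with each $a_{2i-1}$ coprime to $n$ --- i.e.\ it proves (1)$\Rightarrow$(4) directly, after which (4)$\Rightarrow$(5) is trivial. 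You instead prove (5)$\Rightarrow$(4) by an independent normalization of an arbitrary Cayley presentation, using only the standing hypothesis that the codimension-$1$ facets are basic; your dictionary (basic-ness of $F_j$ modulo the first-factor directions $\Leftrightarrow$ saturation of $\langle u_i:i\neq j\rangle$ in the second $\Z^d$ $\Leftrightarrow$ primitivity of $(a_j,n)$, hence $\gcd(a_j,n)=1$) is accurate, and the shear and sign adjustments are legitimate unimodular operations. This buys a self-contained argument that can be verified from the \emph{statements} of the cited results alone, without reopening the proof of Theorem~\ref{thm:GeneralizedWhite}, and in that sense it actually fills a small expository gap the paper leaves open; the paper's route is shorter but only because the heavy lifting was already done inside the earlier proof.
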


 As an illustration consider the case $d=3$. Here the cases (1) and (2) become the single case (1').

\begin{theorem}
 Let $\Delta\subset\R^5$ be a $5$--dimensional lattice simplex. Then the following statements are equivalent:
 \begin{enumerate}
  \item[(1')] $\Delta$ is an empty lattice simplex and the only lattice points contained in $2\Delta$ are either its vertices or the midpoints of its edges.
  \item[(3)] $h_\Delta^*=1+(n-1)t^3$ for an integer $n\ge1$.
  \item[(4)] $\Delta\cong\Delta_1*\Delta_2*\Delta_3$ for $1$--dimensional lattice simplices $\Delta_1,\Delta_2,\Delta_3\subset\R^3$.
  \item[(5)] $\Delta$ is isomorphic to one of the lattice simplices $\Delta(a_1,a_2,n)$ of example \ref{exm:representativeOfIsomorphismClasses}.
 \end{enumerate}
\end{theorem}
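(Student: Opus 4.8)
The plan is to derive this statement as the case $d=3$ of the preceding theorem together with Proposition~\ref{prop: h-Polynomial vs lattice free simplex}. Note first that for $d=3$ the conditions (3), (4), (5) of the present statement are literally conditions (3), (5), (4) of the preceding theorem; the latter, however, carries the standing hypothesis that the codimension $1$ facets of $\Delta$ are basic, whereas the present statement does not. Hence the essential point is to show, for an arbitrary $5$--dimensional lattice simplex $\Delta=\conv(v_1,\ldots,v_6)$, that the elementary condition (1') is equivalent to condition (1) of Proposition~\ref{prop: h-Polynomial vs lattice free simplex}, namely that $k\Delta\cap\Z^5\subseteq\Z v_1+\cdots+\Z v_6$ for $k=1,2$. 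Once this is in hand, Proposition~\ref{prop: h-Polynomial vs lattice free simplex} promotes (1') to (3) and, via its clause (2), forces all codimension $1$ facets of $\Delta$ to be basic, so the preceding theorem applies and yields $(3)\Leftrightarrow(4)\Leftrightarrow(5)$.

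The first step is therefore the equivalence of (1') with condition (1) of Proposition~\ref{prop: h-Polynomial vs lattice free simplex}, which is a short barycentric computation. After an affine unimodular transformation we may assume $v_6=0$, so that $v_1,\ldots,v_5$ is a $\Z$--basis of $\Lambda:=\Z v_1+\cdots+\Z v_6$. A point of $k\Delta$, written $\sum_{i=1}^6\lambda_i v_i$ with $\lambda_i\ge 0$ and $\sum_i\lambda_i=k$, lies in $\Lambda$ if and only if all $\lambda_i$ are integers. For $k=1$ the only such points are the six vertices, so $\Delta\cap\Z^5\subseteq\Lambda$ is equivalent to $\Delta$ being empty. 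For $k=2$ the integral solutions are $\lambda=2e_i$, giving the vertices $2v_i$ of $2\Delta$, and $\lambda=e_i+e_j$ with $i<j$, giving the midpoints $v_i+v_j$ of the edges of $2\Delta$; thus $2\Delta\cap\Z^5\subseteq\Lambda$ is equivalent to every lattice point of $2\Delta$ being a vertex or an edge midpoint of $2\Delta$. Conjoining the $k=1$ and $k=2$ statements gives exactly (1').

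It remains to close the circle, \ie to check that each of (3), (4), (5) implies one of the conditions already linked. For (3) this is Proposition~\ref{prop: h-Polynomial vs lattice free simplex} again. For (5) one computes the $h^*$--polynomial of $\Delta(a_1,a_2,n)$ directly: listing the lattice points of $\Par$ of the lifted simplex shows that they occur only at heights $0$ and $3$, precisely because $(a_1a_2,n)_\Z=(1)_\Z$ is equivalent to $\gcd(a_1,n)=\gcd(a_2,n)=1$; hence $h^*_{\Delta(a_1,a_2,n)}=1+(n-1)t^3$, which is (3). The delicate direction is $(4)\Rightarrow(3)$, which I expect to be the main obstacle: a Cayley polytope $\Delta_1*\Delta_2*\Delta_3$ of three lattice segments in $\R^3$ need not have basic facets, and need not even be empty, merely because it happens to be a $5$--dimensional simplex (for instance $\conv(0,e_1)*\conv(0,e_2)*\conv(0,2e_3)$ contains a lattice point in the relative interior of one of its edges), so the preceding theorem cannot be invoked for it as it stands. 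One has to use that $\Delta$ is the given simplex, reduce the three segments $\Delta_i$ --- by a lattice automorphism of $\R^3$, exploiting that their direction vectors are linearly independent --- to the normal form $\conv(0,e_1)$, $\conv(0,e_2)$, $\conv(0,(a_1,a_2,n))$ of Example~\ref{exm:representativeOfIsomorphismClasses}, and verify that the reduction is forced into the coprime case $(a_1a_2,n)_\Z=(1)_\Z$, so that $\Delta\cong\Delta(a_1,a_2,n)$ and the previous paragraph applies. Everything beyond this reduction is bookkeeping on top of the preceding theorem and Proposition~\ref{prop: h-Polynomial vs lattice free simplex}.
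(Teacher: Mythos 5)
Your overall strategy --- translate (1') into condition (1) of Proposition \ref{prop: h-Polynomial vs lattice free simplex} by the barycentric computation, and then invoke that proposition together with Theorem \ref{thm:GeneralizedWhite} --- is exactly how the paper intends this statement to be read: it is presented without proof as the case $d=3$ of the preceding five-condition theorem, the only new content being that for $d=3$ conditions (1) and (2) of that theorem collapse to the single elementary condition (1'). Your verification that (1') is equivalent to $k\Delta\cap\Z^5\subseteq\Z v_1+\cdots+\Z v_6$ for $k=1,2$ is correct, and so is your direct computation of the parallelepiped of $\Delta(a_1,a_2,n)$ giving $(5)\Rightarrow(3)$.

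The gap is the direction $(4)\Rightarrow(3)$, and here your proposal is internally inconsistent: you exhibit a Cayley polytope of three segments that violates (1'), and then nevertheless assert that the reduction of the three segments to the normal form of Example \ref{exm:representativeOfIsomorphismClasses} ``is forced into the coprime case''. It is not forced. The paper's own Example \ref{exam:Counterexample} with $p,q\ge2$ is a $5$--dimensional lattice simplex satisfying (4) --- it is $\Delta_1*\Delta_2*\Delta_3$ for three \emph{primitive} segments, and is even empty --- for which $2\Delta$ contains the lattice points $(1,0,1,k,0)$, $0<k<p$, none of which is a vertex or an edge midpoint; so $(4)\Rightarrow(1')$ is false as literally stated, and no reduction argument can rescue it. (For that example the reduction you describe is not even available: the directions $(1,0,0)$ and $(1,p,0)$ do not extend to a $\Z$--basis of $\Z^3$ when $p>1$, so the triple cannot be normalized to $\conv(0,e_1),\conv(0,e_2),\conv(0,(a_1,a_2,n))$ at all.) The only coherent reading is that the statement still carries the standing hypothesis of the theorem it specializes, namely that all codimension $1$ facets of $\Delta$ are basic lattice simplices; note that the extra points of Example \ref{exam:Counterexample} lie on the boundary of $2\Delta$, so the unconditional part of the implication $(4)\Rightarrow\bigl(\mathrm{Int}(k\Delta)\cap\Z^5=\leer\text{ for }k=1,2\bigr)$ proved at the end of Section \ref{sec:TheProof} survives, and it is exactly the basicness of the facets, fed into the implication $(2)\Rightarrow(1)$ of Proposition \ref{prop: h-Polynomial vs lattice free simplex}, that upgrades this to (1'). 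So either restore that hypothesis in your statement (after which your remaining work on (4) disappears, since $(4)\Rightarrow(1')$ becomes precisely $(2)\Rightarrow(1)$ of Theorem \ref{thm:GeneralizedWhite}), or accept that (4) must be replaced by the normalized condition (5).
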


Finally we want to give an interesting example which shows that none of the assumptions made in our generalization of the Theorem of White can be dropped, \eg one cannot omit the assumption that all codimension $1$ facets are basic lattice simplices.

\begin{example}\label{exam:Counterexample}
 For two integers $p,q$ we define the following linearly independent segments of $\R^3$

 \[
  \Delta_1:=\conv(0, (1, 0, 0))
  \Delta_2:=\conv(0, (1, p, 0))
  \Delta_3:=\conv(0, (1, 0, q))
 \]

 Then the Cayley polytope $\Delta_1*\Delta_2*\Delta_3$ is a $5$--dimensional lattice simplex of the affine lattice plane $\{x_1+x_2+x_3=1\}\subset\R^6$ and isomorphic to
 \[
  \Delta=\conv(0\times\Delta_1,e_1\times\Delta_2,e_2\times\Delta_3)\subset\R^5
 \]
 Then $\Delta$ is empty and the multiple $2\Delta$ contains lattice points which are not an integer linear combination of the vertices of $\Delta$, namely for $k=1,\ldots,p-1$ and $l=1,\ldots,q-1$
 \begin{align*}
  \frac{k}{p}(1,0,1,p,0)+\frac{p-k}{p}(1,0,0,0,0)+\frac{p-k}{p}(0,0,1,0,0)+\frac{k}{p}(0,0,0,0,0)=\\
  (1,0,1,k,0)\\
  \text{and }\frac{l}{q}(0,1,1,0,q)+\frac{p-l}{p}(0,1,0,0,0)+\frac{p-l}{p}(0,0,1,0,0)+\frac{l}{p}(0,0,0,0,0)=\\
  (0,1,1,0,l)\\
 \end{align*}
 To be more precisely this shows that $G(\Delta):=\Z^{6}/\left(\Z(v_1,1)+\ldots+\Z(v_{6},1)\right)\cong\Z/p\Z\times\Z/q\Z$ where $v_i$ are the vertices of $\Delta$ defined above. Indeed every element of $G(\Delta)$ can be uniquely represented by an element of $\Par(\Delta)$ and one of the sub lattice $\Z(v_1,1)+\ldots+\Z(v_{6},1)$. Now we found $pq$ distinct elements in $\Par(\Delta)$ and by \cite{Cassels:GeometryOfNumbers} or \cite[Lemma 2]{Sebo:HilbertBases} the total number of lattice points contained in $\Par(\Delta)$ equals to the absolute value of the matrix, whose rows consists of the vertices of $\Delta$ and added a column with ones. One easily verifies that the absolute value of this matrix is $pq$ as well. On the other hand we will see that all the lattice simplices $\Delta$ of our generalization of the theorem of White have $G(\Delta)$ cyclic.
\end{example}
\section{Bernoulli functions}
\label{sec:BernoulliFunctions}

In this section we will prove theorem (\ref{prop:BernoulliFunctions}). The proof will be based on \cite[Section 1]{MorrisonStevens:TerminalQuotSing}.

The case $n=2$ is obvious so in the following $n$ shall denote an integer bigger than $2$. We will write $G_n$ for the abelian group of units modulo $n$, \ie $G_n=\left(\IZ/n\IZ\right)^*$. Let $\IC[G_n]$ be the group algebra generated as a vector space over the complex numbers by the group isomorphic to $G$, the isomorphism given by $g\in G_n\mapsto \sigma_g$, \ie $\IC[G_n]=\{\sum_{g\in G_n}a_g \sigma_g:a_g\in\IC\}$. Let $g\in G_n$ and $x\in\tfrac{1}{n}\IZ/\IZ$. Let $k$ be and integer representative of $g$ modulo $n$, \ie $k\in\IZ$, $g\equiv k\pmod{n}$, and let $y$ be a rational number representing $x$ modulo $\IZ$, \ie $x\equiv y\pmod{\IZ}$. Then $ky$ is a rational number which is unique modulo $\IZ$. Since the \textit{1}st Bernoulli function $B_1$ is a periodic function with period $1$ the expression $B_1(ky)$ does not depend on the choice of the representatives. So writing $B_1(gx)$ for elements $g\in G_n$ and $x\in\tfrac{1}{n}\IZ/\IZ$ makes sence. To any $x\in\tfrac{1}{n}\IZ/\IZ$ we define its \emph{Stickelberger element} by

\[
 S(x):=\sum_{g\in G_n}B_1(gx)\sigma_g
\]

Let $U$ be the vector subspace of $\IC[G_n]$ generated by the image of $S$, \ie $U=\Span(S(x):x\in\tfrac{1}{n}\IZ/\IZ)$. Let $\{\sigma_g^*:g\in G_n\}$ be the basis of the dual vector space $\IC[G_n]^*$ which is dual to the natural basis $\{\sigma_g:g\in G_n\}$ of $\IC[G_n]$ and let $\left<\cdot,\cdot\right>:\IC[G_n]\times\IC[G_n]^*\rightarrow\IC;(v,f)\mapsto f(v)$ be the natural pairing of the dual space. The key idea is to show that a basis of the orthogonal complement $U^\bot=\{f\in \IC[G_n]^*:f(u)=0\text{ for all }u\in U\}$ of the subspace $U$ is given by all elements $u_g^*:=\sigma_g^*+\sigma_{-g}^*$ for $g\in G_n$.

\begin{theorem}\label{thm:BasisOfWBot}
 $\{u_g^*:g\in G_n\}$ is a basis of $U^\bot$. In particular $\dim_{\IC}(U^\bot)=\dim_{\IC}(U)=\varphi(n)/2$.
\end{theorem}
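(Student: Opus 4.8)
The plan is to establish the two inequalities $\dim U^\bot\ge\varphi(n)/2$ and $\dim U^\bot\le\varphi(n)/2$ separately, the first being elementary and the second carrying the real content.

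\emph{The easy half.} First I would check that every $u_g^*$ lies in $U^\bot$. Since $B_1$ is an odd function, i.e.\ $B_1(-y)=-B_1(y)$ for all $y\in\IR$ (for $y\in\IZ$ both sides vanish, and for $y\notin\IZ$ one has $\fractional{-y}=1-\fractional{y}$), we get $\left<S(x),u_g^*\right>=B_1(gx)+B_1(-gx)=0$ for every $x\in\tfrac1n\IZ/\IZ$, hence $u_g^*\in U^\bot$. Because $n>2$, no element of $G_n$ equals its own negative, so $g\mapsto\{g,-g\}$ partitions $G_n$ into $\varphi(n)/2$ pairs; the corresponding $u_g^*$ are pairwise distinct and, having pairwise disjoint supports with respect to the dual basis $\{\sigma_g^*\}$, linearly independent. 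Thus $\dim U^\bot\ge\varphi(n)/2$, equivalently $\dim U\le\varphi(n)/2$.

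\emph{The hard half.} It remains to prove $\dim U\ge\varphi(n)/2$. The key structural observation is that $U$ is a $\IC[G_n]$-submodule of $\IC[G_n]$: a direct computation using the periodicity of $B_1$ gives $\sigma_h\cdot S(x)=S(h^{-1}x)$ for $h\in G_n$, so the spanning set $\{S(x)\}$ is permuted by left multiplication. Since $G_n$ is abelian, $\IC[G_n]=\bigoplus_\chi\IC e_\chi$ splits into one-dimensional simple modules indexed by the characters $\chi$ of $G_n$, with idempotents $e_\chi=\tfrac1{\varphi(n)}\sum_{h\in G_n}\overline{\chi(h)}\sigma_h$; consequently $U=\bigoplus_{\chi\in T}\IC e_\chi$ with $T=\{\chi:e_\chi U\ne0\}$, so $\dim U=|T|$. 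From $e_\chi\sigma_k=\chi(k)e_\chi$ one obtains $e_\chi S(x)=\bigl(\sum_{k\in G_n}B_1(kx)\chi(k)\bigr)e_\chi$, so $\chi\in T$ exactly when $\sum_{k\in G_n}B_1(kx)\chi(k)\ne0$ for some $x$. For $\chi$ even (in particular for the trivial character) the substitution $k\mapsto-k$ together with the oddness of $B_1$ shows this sum always vanishes, so $T$ contains only odd characters. For $\chi$ odd, let $f$ be its conductor and $\chi_0$ the primitive character mod $f$ inducing it; then $\chi_0$ is odd, hence nontrivial, so $f\ge3$. Taking $x=1/f$, and using that $B_1(k/f)$ and $\chi(k)=\chi_0(k\bmod f)$ depend only on $k\bmod f$ while $G_n\to G_f$ has all fibres of size $\varphi(n)/\varphi(f)$, one computes
\[
 \sum_{k\in G_n}B_1(k/f)\chi(k)
 =\frac{\varphi(n)}{\varphi(f)}\sum_{h\in G_f}B_1(h/f)\chi_0(h)
 =\frac{\varphi(n)}{f\,\varphi(f)}\sum_{\substack{1\le h\le f-1\\ \gcd(h,f)=1}}h\,\chi_0(h),
\]
the last equality using $B_1(h/f)=h/f-\tfrac12$ for $0<h<f$ and $\sum_{h\in G_f}\chi_0(h)=0$. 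The right-hand side is a nonzero rational multiple of the generalized Bernoulli number $B_{1,\chi_0}$, which does not vanish since $\chi_0$ is odd. Hence every odd character of $G_n$ lies in $T$, so $\dim U=|T|=\varphi(n)/2$; combined with the easy half this yields $\dim U^\bot=\dim U=\varphi(n)/2$ and therefore $U^\bot=\Span(u_g^*:g\in G_n)$, as claimed.

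\emph{Main obstacle.} Apart from routine bookkeeping in the group algebra, the argument rests on the non-vanishing of $\sum_{h\in G_f}h\,\chi_0(h)$ (equivalently of $B_{1,\chi_0}=-L(0,\chi_0)$) for odd primitive Dirichlet characters $\chi_0$, a classical consequence of the non-vanishing of Dirichlet $L$-functions at $s=1$; the only subtlety besides invoking it is the reduction of a general odd character mod $n$ to its primitive conductor, so that the classical statement applies.
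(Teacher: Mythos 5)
Your proof is correct and follows essentially the same route as the paper: both decompose $\IC[G_n]$ into its one-dimensional character isotypic components, reduce an odd character to its conductor, and invoke the non-vanishing of $B_{1,\chi}$ for odd primitive $\chi$ to show $\dim U\ge\varphi(n)/2$, pairing this with the elementary observation that the $\varphi(n)/2$ vectors $u_g^*$ lie in $U^\bot$ and are independent. The only cosmetic difference is that you compute the projections $e_\chi S(1/f_\chi)$ onto the isotypic pieces, whereas the paper exhibits the same information by writing $|G_n|B_{1,\chi}e_\chi=\sum_{k}\chi(k)S(k/f_\chi)$ as an explicit element of $U$.
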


Let us grant this theorem for a moment

\begin{proof}[Proof of proposition (\ref{prop:BernoulliFunctions})]
 Let $\IZ\rightarrow G_n; k\mapsto \bar k:=k+n\IZ$ be the natural epimorphism. Let $u^*:=\sigma_{\overline{a_1}}^*+\sigma_{\overline{a_2}}^*+\ldots+\sigma_{\overline{a_n}}^*\in \IC[G_n]^*$. By hypothesis for all integers $t$
 
 \begin{multline*}
  \left<S\left(\frac{t}{n}\right),u^*\right>=\left<\sum_{g\in G_n}B_1\left(\frac{tg}{n}\right)\sigma_g,\sigma_{\overline{a_1}}^*+\sigma_{\overline{a_2}}^*+\ldots+\sigma_{\overline{a_n}}^*\right>\\
  =B_1\left(\frac{ta_1}{n}\right)+B_1\left(\frac{ta_2}{n}\right)+\ldots+B_1\left(\frac{ta_n}{n}\right)=0
 \end{multline*}
 
 Thus $u^*\in U^\bot$. Let $k_g$ for all elements $g$ of $G_n$ be positive integers such that $u^*=\sum_{g\in G_n}k_g \sigma_g^*$. For what follows we will consider the group $G_n$ as the set $\{i:i=1,2,\ldots,n;\gcd(i,n)=1\}$ together with the obvious composition. By theorem (\ref{thm:BasisOfWBot}) there exists a set of complex numbers $\{\lambda_g\in\IC:g\in G_n,g\le n/2\}$ such that
 
 \[
  u^*=\sum_{g\in G_n}k_g \sigma_g^*=\sum_{\substack{g\in G_n\\ g\le n/2}}\lambda_g(\sigma_g^*+\sigma_{-g}^*)
 \]
 
 Hence $k_g=k_{-g}$ for all $g\in G_n$.
\end{proof}

Next we will prove theorem \ref{thm:BasisOfWBot}.

\begin{proof}[Proof of \ref{thm:BasisOfWBot}]
 Let $\rho:G\rightarrow\mathrm{GL}(\IC[G_n])$ be the regular representation of $G_n$, \ie for all $g\in G_n$
 \[
  \rho_g:\IC[G_n]\rightarrow\IC[G_n];\sigma_h\mapsto \sigma_{gh}
 \]
 In other words we consider the regular module $\IC[G_n]^\circ$, \ie $\IC[G_n]$ becomes a $\IC[G_n]$--module by the scalar multiplication induced by the multiplication of the algebra. Since we work over the field of complex numbers there exists a decomposition of $\IC[G_n]$ into a direct sum $W_1\oplus W_2\oplus\ldots\oplus W_h=\IC[G_n]$ of irreducible $\IC[G_n]$--submodules of $\IC[G_n]$. Since $G_n$ is commutative it follows that all the irreducible $\IC[G_n]$--modules $W_i$ are $1$--dimensional as complex vector spaces and pairwise nonisomorphic. Let $1=\sum_{i=1}^{|G_n|}e_i$ with nonzero idempotents $e_i\in W_i$, \ie $e_i^2=e_i$, which satisfay $e_ie_j=0$ for $i\neq j$. Then $e_i$ is a (vector space) basis of $W_i$ for all $i=1,2,\ldots,|G_n|$ and all the $e_i$ together form a basis of $\IC[G_n]$. By \cite[Theorem 2.12]{Isaacs:CharacterTheory} we have that 
 
 \[
  e_i=\frac{1}{|G_n|}\sum_{g\in G_n}\chi_i(1)\chi_i(g^{-1})\sigma_g
 \]
 
 where $\chi_i$ is the corresponding character to the irreducible $\IC[G_n]$--module $W_i$. Since $G_n$ is commutative the characters $\chi_i$ have degree $1$, \ie $\chi_i(1)=1$, \ie $e_i=1/|G_n|\sum_{g\in G_n}\chi_i(g^{-1})\sigma_g$. The characters $\chi_i$ for $i=1,2,\ldots,\varphi(n)$ are all possible characters of degree $1$ of $G_n$ which we will denote by $C(n)$, \ie $C(n)=\{\chi_i:i=1,2,\ldots,\varphi(n)\}=\{\chi:G_n\rightarrow\IC^*\text{ homomorphism of groups}\}$. Let $\chi\in C(n)$. If $\chi(-1)=-1$ then we call the character \emph{odd}. We will denote the set of all odd characters of degree $1$ of $G_n$ by $C^-(n)$. It holds that $|C^-(n)|=\varphi(n)/2$. Let $m$ be a positive integer dividing $n$. By the natural morphism $i:G_n\rightarrow G_m;x+n\IZ\mapsto x+m\IZ$ we can pull back any character $\chi\in C(m)$ by $i^*\chi:=\chi\circ i\in C(n)$. For any character $\chi\in C(n)$ there exists a smallest unique positive integer $f_\chi\mid n$, called the \emph{conductor} of $\chi$, and a unique character $P\chi\in C(f_\chi)$ such that $\chi=j^*P\chi$ where $j$ is the natural morphism $j:G_n\rightarrow G_{f_\chi}$. Let $\IZ\rightarrow G_n;k\mapsto \bar k:=k+n\IZ$ be the natural epimorphism. From now on we will regard a character $\chi\in C(n)$ as a map $\IZ\rightarrow\IC$ which maps an integers $a$ to $P\chi(\bar a)$ if $(a,f_\chi)=1$ and $0$ otherwise and we will call it a \emph{Dirichlet character}. Then the function $\chi:\IZ\rightarrow\IC$ satisfays the following conditions
 
 \begin{enumerate}
  \item $\chi(a)=\chi(b)$ if $a\equiv b\pmod{f_\chi}$
  \item $\chi(ab)=\chi(a)\chi(b)$ for all $a,b\in\IZ$
  \item $\chi(a)=0$ if $(a,f_\chi)\neq1$
  \item $\chi(a)=P\chi(a)$ if $(a,f_\chi)=1$
 \end{enumerate}

 By (1) and since $f_\chi\mid n$ it still makes sense to evaluate the function $\chi:\IZ\rightarrow\IC$ on elements $g$ of $G_n$: just choose any integer $k$ such that $k\equiv g\pmod{n}$. To any character $\chi\in C(n)$ we will assign a complex number

 \[
  B_{1,\chi}=\sum_{k=1}^{f_\chi}\chi(k)B_1\left(\frac{k}{f_\chi}\right)
 \]

 We will need the following nontrivial result on the nonvanishing of $B_{1,\chi}$. Indeed Washington writes in \cite[p. 38]{Washington:CyclotomicFields}: \enquote{\dots Note that the theorem implies that $B_{1,\chi}\neq0$ if $\chi$ is odd. There is no elementary proof known for this fact\dots}. A proof can be found in \cite[\S2, Theorem 2]{Iwasawa:PAdicLFunc} or in \cite[Chapter 4]{Washington:CyclotomicFields}.

 \begin{theorem}\label{thm:IwasawaNonVanish}
  If $\chi$ is an odd character, then $B_{1,\chi}\neq0$.
 \end{theorem}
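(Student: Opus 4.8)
\emph{Proof proposal.} This nonvanishing is the analytic core of the theory of Dirichlet $L$--functions and, as Washington himself notes in the passage quoted above, has no known elementary proof. The plan is to reduce it by two purely formal steps to the classical fact that $L(1,\psi)\neq0$ for every nontrivial Dirichlet character $\psi$ --- the statement at the heart of Dirichlet's theorem on primes in arithmetic progressions --- so that the whole substance of the argument is concentrated in that last input, whose only known proofs rely on the Dedekind zeta function of a cyclotomic field together with its simple pole at $s=1$ (equivalently, the analytic class number formula).

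First I would identify $B_{1,\chi}$ with a special value of the Dirichlet $L$--function attached to $\chi$. Since $B_{1,\chi}$ is defined through the conductor $f:=f_\chi$, one may regard $\chi$ as the primitive character $P\chi$ modulo $f$; as $\chi$ is odd it is in particular nontrivial, whence $f>1$ and $\sum_{a=1}^{f}\chi(a)=0$. Splitting the Dirichlet series $L(s,\chi)=\sum_{n\ge1}\chi(n)n^{-s}$ by residues modulo $f$ gives the identity $L(s,\chi)=f^{-s}\sum_{a=1}^{f}\chi(a)\,\zeta(s,a/f)$, where $\zeta(s,x)=\sum_{m\ge0}(m+x)^{-s}$ is the Hurwitz zeta function, valid for all $s$ after analytic continuation. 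Using $\zeta(0,x)=\tfrac12-x$ together with $B_1(a/f)=a/f-\tfrac12$ for $1\le a\le f-1$ coprime to $f$ (and $\chi(f)=0$) one obtains
\[
 L(0,\chi)=\sum_{a=1}^{f}\chi(a)\Big(\tfrac12-\tfrac{a}{f}\Big)=-\frac1f\sum_{a=1}^{f}\chi(a)\,a=-\sum_{a=1}^{f}\chi(a)B_1\!\Big(\frac{a}{f}\Big)=-B_{1,\chi},
\]
so the claim becomes $L(0,\chi)\neq0$.

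Next I would transport this to the edge of the critical strip via the functional equation. For an odd primitive character $\chi$ modulo $f$ the completed $L$--function $\Lambda(s,\chi):=(f/\pi)^{(s+1)/2}\Gamma\!\big(\tfrac{s+1}{2}\big)L(s,\chi)$ satisfies $\Lambda(s,\chi)=\tfrac{\tau(\chi)}{i\sqrt f}\,\Lambda(1-s,\bar\chi)$, where $\tau(\chi)$ is the Gauss sum, of absolute value $\sqrt f\neq0$. Evaluating at $s=0$, where the $\Gamma$-- and power--factors are finite and nonzero, one reads off $L(0,\chi)=\tfrac{\tau(\chi)}{i\pi}\,L(1,\bar\chi)$; hence $B_{1,\chi}\neq0$ precisely when $L(1,\bar\chi)\neq0$.

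It remains to prove $L(1,\psi)\neq0$ for $\psi:=\bar\chi$, which is nontrivial since $\chi$ is odd. Here I would invoke the factorization $\zeta_K(s)=\prod_{\psi'}L(s,\psi')$ of the Dedekind zeta function of $K:=\Q(\zeta_f)$, the product running over the Dirichlet characters $\psi'$ modulo $f$ (each taken through its primitive part): since $\zeta_K(s)$ has a simple pole at $s=1$ while the trivial character contributes exactly the Riemann factor $\zeta(s)$, which likewise has a simple pole there, the quotient $\prod_{\psi'\neq\psi_0'}L(s,\psi')=\zeta_K(s)/\zeta(s)$ extends holomorphically to $s=1$ with value $\mathrm{Res}_{s=1}\zeta_K / \mathrm{Res}_{s=1}\zeta\neq0$; as every $L(s,\psi')$ with $\psi'$ nontrivial is entire, a nonzero product forces each factor to be nonzero at $s=1$, so in particular $L(1,\psi)\neq0$. (When $\chi$ is quadratic one can argue more directly, $\psi$ being then the character of an imaginary quadratic field, so that the analytic class number formula expresses $L(1,\psi)$ through its positive class number.) Combining the three steps yields $B_{1,\chi}\neq0$; the detailed computations can be found in \cite[\S2, Theorem 2]{Iwasawa:PAdicLFunc} and \cite[Chapter 4]{Washington:CyclotomicFields}. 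The hard part is exactly the last step --- the nonvanishing of $L(1,\psi)$ --- while the first two are routine manipulations of Dirichlet series and of the functional equation.
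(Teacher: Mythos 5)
Your outline is correct and is exactly the standard argument from the sources the paper cites ([Iwasawa, \S2, Theorem 2] and [Washington, Chapter 4]): the paper itself gives no proof of Theorem \ref{thm:IwasawaNonVanish}, only these references, and your three steps ($B_{1,\chi}=-L(0,\chi)$ for the primitive character of conductor $f_\chi$, the functional equation relating $L(0,\chi)$ to $L(1,\bar\chi)$, and the nonvanishing of $L(1,\bar\chi)$ via the factorization of $\zeta_{\Q(\zeta_f)}$) reproduce that argument faithfully, with the computations checking out. So this matches the intended proof; nothing further is needed.
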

 
 For all odd Dirichlet characters $\chi_i$ where $\chi_i$ is the character corresponding to the irreducible $\IC[G_n]$--module $W_i$ (see above) we have by theorem \ref{thm:IwasawaNonVanish} that $B_{1,\chi_i}\neq0$. We rescale the vectors $e_i$ by the factor $|G_n|B_{1,\chi_i}$
 \begin{eqnarray*}
  u_{\chi_i}:=|G_n|B_{1,\chi_i}e_i&=&B_{1,\chi_i}\sum_{g\in G_n}\chi_i(g^{-1})\sigma_g=\sum_{g\in G_n}B_{1,\chi_i}\chi_i(g^{-1})\sigma_g=\\
  &=&\sum_{g\in G_n}\sum_{k=1}^{f_{\chi_i}}\underbrace{\chi_i(k)\chi_i(g^{-1})}_{\chi_i(kg^{-1})}B_1\left(\frac{k}{f_{\chi_i}}\right)\sigma_g\\
  &\stackrel{k':=kg^{-1}}{=}&\sum_{g\in G_n}\sum_{k'=1}^{f_{\chi_i}}\chi_i(k')B_1\left(\frac{k'g}{f_{\chi_i}}\right)\sigma_g\\
  &=&\sum_{k'=1}^{f_{\chi_i}}\chi_i(k')\sum_{g\in G_n}B_1\left(\frac{k'g}{f_{\chi_i}}\right)\sigma_g=\sum_{k'=1}^{f_{\chi_i}}\chi_i(k')S\left(\frac{k'}{f_{\chi_i}}\right)
 \end{eqnarray*}
 Thus for all $\chi\in C^-(n)$ it follows that the vector $u_\chi$ is contained in $U$. Since the $e_i$ are linearly independent we get that $\{u_\chi:\chi\in C^-(n)\}$ is a linearly independent set. Hence $\dim U\ge\varphi(n)/2$. Since $\dim U+\dim U^\bot=\dim\IC[G_n]=\varphi(n)$ it follows that $\dim U^\bot\le\varphi(n)/2$. Obviously $u_g^*$ is contained in $U^\bot$ since for all $x\in\tfrac{1}{n}\IZ/\IZ$
 \[
  \left<S(x),u_g^*\right>=\left<\sum_{g\in G_n} B_1(gx)g,\sigma_g^*+\sigma_{-g}^*\right>=B_1(gx)+B_1(-gx)=0
 \]
 
 Furthermore since $\{\sigma_g^*: g\in G_n\}$ form a basis of $\IC[G_n]^*$ we get that $\{u_g^*:g\in G_n\}$ is linearly independent. Thus $\dim U^\bot\ge\varphi(n)/2$.
\end{proof} 
\section{The proof}
\label{sec:TheProof}

In this section we are going to prove theorem \ref{thm:GeneralizedWhite} and theorem \ref{thm: Cyclic Quotient Singularities}. The following technical assertion will be needed in the proof of theorem \ref{thm:GeneralizedWhite}.

\begin{proposition}\label{prop:TechnicalAssertionForGenWhiteProof}
 Let $\Delta=\conv(v_1,\ldots,v_{2d})\subset\R^{2d-1}$ be a $(2d-1)$--dimensional lattice simplex whose codimension 1 facets are basic simplices such that for all $k=1,\ldots,d-1$, $\mathrm{Int}(k\Delta)\cap\Z^{2d-1}=\leer$, \ie $h_\Delta=1+(n-1)t^d$ for an integer $n>0$, \ie for all $k=1,\ldots,d-1$, $k\Delta\cap\Z^{2d-1}\subset\Z v_1+\ldots+\Z v_{2d}$ (see proposition \ref{prop: h-Polynomial vs lattice free simplex}). Let $\sum_{i=1}^{2d}\lambda_i(v_i,1)\in\Z^{2d}$ be a lattice vector for real numbers $0\le\lambda_i<1$. Then either $\sum_i\lambda_i=0$ or $\sum_i\lambda_i=d$. Furthermore, if $\sum_i\lambda_i=d$ then $\lambda_i\neq0$ for all $i$.
\end{proposition}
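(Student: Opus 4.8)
The plan is to read the first assertion straight off the shape of the $h^*$--polynomial, and then to get the ``furthermore'' part by a complementation trick combined with the trivial bound $\lambda_i<1$.

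First I would set $w_i:=(v_i,1)\in\Z^{2d}$ and $s:=\sum_{i=1}^{2d}\lambda_i$. Since each $w_i$ has last coordinate $1$, the number $s$ is exactly the last coordinate of the lattice vector $\sum_i\lambda_iw_i$, so $s$ is a non--negative integer (and $s<2d$ because $\lambda_i<1$). By hypothesis $\sum_i\lambda_iw_i$ is a lattice point lying in the half--open parallelepiped $\Par(\Delta)$ at height $s$, so by the very definition of the $h^*$--polynomial the coefficient $h_s^*$ is positive. But $h_\Delta^*=1+(n-1)t^d$ forces $h_k^*=0$ for every $k\notin\{0,d\}$, so $s=0$ or $s=d$. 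That settles the first claim.

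For the second claim I would assume $s=d$ and put $m:=|\{\,i:\lambda_i>0\,\}|$; as $s=d>0$, at least one $\lambda_i$ is positive, so $m\ge1$. Then I form the complementary vector
\[
 w:=\sum_{i:\,\lambda_i>0}(1-\lambda_i)\,w_i=\Bigl(\sum_{i:\,\lambda_i>0}w_i\Bigr)-\sum_{i=1}^{2d}\lambda_iw_i .
\]
The right--hand side is a difference of lattice vectors, hence $w\in\Z^{2d}$; and the coefficients of $w$, namely $1-\lambda_i\in(0,1)$ whenever $\lambda_i>0$ (and $0$ otherwise, those indices being dropped), all lie in $[0,1)$, so $w\in\Par(\Delta)$. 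Its last coordinate is $m-s=m-d$, so applying the already proved first claim to $w$ (equivalently: to the coefficients $\fractional{-\lambda_i}$, which equal $1-\lambda_i$ for $\lambda_i>0$ and $0$ otherwise) gives $m-d\in\{0,d\}$, i.e. $m\in\{d,2d\}$. The case $m=d$ is impossible, since it would give $d=s=\sum_{i:\lambda_i>0}\lambda_i<m=d$; hence $m=2d$, that is $\lambda_i\ne0$ for all $i$.

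I do not expect a genuine obstacle: the entire argument is bookkeeping around the correspondence between lattice points of $\Par(\Delta)$ at height $k$ and the coefficient $h_k^*$. The one spot that needs a little care is the formation of the complementary vector $w$: the indices with $\lambda_i=0$ must be omitted (equivalently, one replaces $\lambda_i$ by $\fractional{-\lambda_i}$), so that $w$ is again an honest lattice point of the \emph{half--open} parallelepiped $\Par(\Delta)$, which is precisely what allows the first claim to be re--applied to it.
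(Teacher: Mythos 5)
Your proof is correct and follows essentially the same route as the paper: the heart of both arguments is the complementation trick $\lambda_i\mapsto\fractional{1-\lambda_i}$ (restricted to the indices with $\lambda_i>0$) applied to a putative lattice point of $\Par(\Delta)$. The only cosmetic difference is that you read the first claim directly off the shape $h^*_\Delta=1+(n-1)t^d$ --- which the proposition's hypothesis already grants via Proposition \ref{prop: h-Polynomial vs lattice free simplex} --- whereas the paper re-derives the bounds $d\le\sum_i\lambda_i\le d$ from the condition $k\Delta\cap\Z^{2d-1}\subset\Z v_1+\ldots+\Z v_{2d}$ using the same complementation step.
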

\begin{proof}
 Let $0\neq\sum_{i=1}^{2d}\lambda_i(v_i,1)\in\Z^{2d}$ for real numbers $0\le\lambda_i<1$. Then $k:=\sum_{i=1}^{2d}\lambda_i$ is an integer. Hence $\sum_{i=1}^{2d}\lambda_iv_i$ is a lattice vector of $\Z^{2d-1}$ which is contained in the multiple $k\Delta$ and which is not an integer linear combination of the vertices of $\Delta$. So by assumption $k\ge d$. We claim that $k\le d$ as well: Assume by contradiction that $k>d$. Then $\sum_{i=1}^{2d}\fractional{1-\lambda_i}v_i$ is a lattice vector of $\Z^{2d-1}$ which is not an integer linear combination of the vertices of $\Delta$. Furthermore we have that $\sum_{i=1}^{2d}\fractional{1-\lambda_i}\le 2d-\sum_{i=1}^{2d}\lambda_i<d$ which contradicts our assumption. So $\sum_{i=1}^{2d}\lambda_i=d$.
  
  If $0=\sum_{i=1}^{2d}\lambda_i(v_i,1)\in\Z^{2d}$ is a lattice vector for real numbers $0\le\lambda_i<1$ then, since $(v_1,1),\ldots,(v_{2d},1)$ are linearly independent, $\lambda_i=0$ for all i and thus $\sum_{i=1}^{2d}\lambda_i=0$.

  Now assume that $0\neq\sum_{i=1}^{2d}\lambda_i(v_i,1)\in\Z^{2d}$ like above but with \eg $\lambda_1=0$. Then $0\neq\sum_{i=1}^{2d}\fractional{1-\lambda_i}(v_i,1)\in\Z^{2d}$ with $0\le\fractional{1-\lambda_i}<1$ and $\sum_{i=1}^{2d}\fractional{1-\lambda_i}\le 2d-1-\sum_{i=1}^{2d}\lambda_i=d-1$. Contradiction.
\end{proof}

\begin{proof}[Proof of Theorem \ref{thm:GeneralizedWhite}]
 Let $\Delta=\conv(v_1,\ldots,v_{2d})\subset\R^{2d-1}$ be a $(2d-1)$--dimensional lattice simplex whose codimension $1$ facets are basic lattice simplices such that for all $k=1,\ldots,d-1$, $\mathrm{Int}(k\Delta)\cap\Z^{2d-1}=\leer$. We consider the lattice simplex
\[
 \Delta':=\conv((v_1,1),\ldots,(v_{2d},1))\subset\{(x_1,\ldots,x_{2d})\in\R^{2d}:x_{2d}=1\}\subset\R^{2d}
\]
Since by assumption $\Gamma:=\conv(v_1,\ldots,v_{2d-1})$ is a basic lattice simplex, there exists a unimodular linear isomorphism $\varphi:\Z^{2d}\rightarrow\Z^{2d}$ such that $\varphi(\Gamma)=\conv(e_{2d},e_1+e_{2d},\ldots,e_{2d-2}+e_{2d})$ and $\varphi(v_{2d},1)=(v_{2d}',1)$ for an integer vector $v_{2d}'\in\Z^{2d-1}$. Thus
\[
 G(\Delta):=\Z^{2d}/\left(\Z(v_1,1)+\ldots+\Z(v_{2d},1)\right)\cong\Z/N\Z
\]
for an integer $N>0$. Take a generator $(w,m)=\sum_{i=1}^{2d}\lambda_i(v_i,1)$ of $G(\Delta)$ with rational $0\le\lambda_i<1$. Here we used the fact that any integer vector $v\in\Z^{2d}$ can be uniquely represented by an element of $\Z(v_1,1)+\ldots+\Z(v_{2d},1)$ and an integer vector of $\{\sum_{i=1}^{2d}\mu_i(v_i,1):0\le\mu_i<1\}$. We can write
\[
 \lambda_i=\frac{a_i}{n}\text{ for integers }a_i,n\text{ with }n>0\text{ and }(a_1,\ldots,a_{2d},n)=1
\]
The last condition ensures that for all $t\in\Z\ohne n\Z$
\[
 0\neq\sum_{i=1}^{2d}\fractional{\frac{ta_i}{n}}(v_i,1)\in\Par(\Delta)
\]
Since $(w,m)$ is a generator of $G(\Delta)\cong\Z/N\Z$ this shows $N=n$. By proposition \ref{prop:TechnicalAssertionForGenWhiteProof} it follows that for all $t\in\Z\ohne n\Z$ and for all $i=1,\ldots,2d$, $\{ta_i/n\}\neq0$, \ie $(a_i,n)=1$ for all $i=1,\ldots,2d$.  This shows that we can assume that $a_{2d}=1$; just take $\sum_{i=1}^{2d}\{ta_i/n\}(v_i,1)$ for an appropriate $t\in\Z$ with $(t,n)=1$. Then $(v_{2d},1)=n(w,m)-\sum_{i=1}^{2d-1}a_i(v_i,1)$, \ie $\Z(v_1,1)+\ldots+\Z(v_{2d},1)\subset\Z(v_1,1)+\ldots+\Z(v_{2d-1},1)+\Z(w,m)$. Hence
\[
\Z^{2d}/\left(\Z(v_1,1)+\ldots+\Z(v_{2d-1},1)+\Z(w,m)\right)=0
\]
since $(w,m)$ is a generator of $G(\Delta)$. In other words $\{(v_1,1),\ldots,(v_{2d-1},1),(w,m)\}$ is a lattice basis for $\Z^{2d}$.

Obviously, since $(a_i,n)=1$ for all $i=1,\ldots,2d$, it holds for all $t\in\Z\ohne n\Z$
\[
 d=\sum_{i=1}^{2d}\fractional{\frac{ta_i}{n}}\text{ (by proposition \ref{prop:TechnicalAssertionForGenWhiteProof})}\Leftrightarrow 0=\sum_{i=1}^{2d}B_1\left(\frac{ta_i}{n}\right)
\]
The right hand equation is trivially satisfied for all integers $t\in n\Z$ since we have set $B_1(k)=0$ for all integers $k$. Then by proposition \ref{prop:BernoulliFunctions} we can assume (after reordering the vectors $v_1,\ldots,v_{2d-1}$) that $a_{2i-1}+a_{2i}\equiv0\pmod{n}$ for all $i=1,,\ldots,d$. The vector $v_{2d}$ we can leave at its place.

Let $\varphi$ be the unique unimodular linear isomorphism which maps the basis
\[
 \{(v_1,1),\ldots,(v_{2d-1},1),(w,m)-(v_2,1)-(v_4,1)-\ldots-(v_{2d-2},1)\}
\]
of $\Z^{2d}$ on the basis
\[
 \{e_1\times0,e_1\times e_1,e_2\times0,e_2\times e_2,\ldots,e_d\times0,e_d\times e_d\}
\]
Then
\begin{align*}
 \varphi(v_{2d},1)&=\varphi(n(w,m)-\sum_{i=1}^{2d-1}a_i(v_i,1))=n\varphi((w,m)-\sum_{i=1}^{d-1}(v_{2i},1))-\\
 &-\sum_{i=1}^{d-1}\left(a_{2i-1}\varphi(v_{2i-1},1)+(a_{2i}-n)\varphi(v_{2i},1)\right)-a_{2d-1}\varphi(v_{2d-1},1)
\end{align*}
Since $a_{2i-1}+a_{2i}=n$ for all $i=1,\ldots,d$, it follows
\begin{align*}
 \varphi(v_{2d},1)&=ne_d\times e_d-\sum_{i=1}^{d-1}\left(a_{2i-1} e_i\times0-a_{2i-1}e_i\times e_i\right)-(n-1)e_d\times0\\
 &=e_d\times(a_1,a_3,a_5,\ldots,a_{2d-3},n)
\end{align*}
Hence $\Delta$ is affine unimodular isomorphic to $\Delta_1*\ldots*\Delta_d$ for
\begin{align*}
 \Delta_i&:=\conv(0,e_i)\subset\R^d\text{ for }i=1,\ldots,d-1\text{ and }\\
 \Delta_d&:=\conv(0,(a_1,a_3,a_5,\ldots,a_{2d-3},n))\subset\R^d
\end{align*}
 This proves (1)$\Rightarrow$(2).

 Now let $\Delta\subset\R^{2d-1}$ be a $(2d-1)$--dimensional lattice simplex whose codimension $1$ facets are basic lattice simplices such that $\Delta\cong\Delta_1*\ldots*\Delta_d$ for $1$--dimensional lattice simplices $\Delta_i=\conv(u_{i1},u_{i2})\subset\R^d$. Let $v_1,\ldots,v_{2d}\subset\R^{2d-1}$ be the vertices of $\Delta$ and $\sum_{i=1}^{2d}\lambda_iv_i\in\Z^{2d-1}$ for rational numbers $0<\lambda_i$ such that $\sum_i\lambda_i=k\in\{1,\ldots,d-1\}$, \ie $\sum_i\lambda_iv_i\in\mathrm{Int}(k\Delta)\cap\Z^{2d-1}$.

We consider $\Delta':=\conv((v_1,1),\ldots,(v_{2d},1))\subset\Z^{2d}$. Then $\sum_{i=1}^{2d}\lambda_i(v_i,1)\in\Z^{2d}$. Furthermore we observe that there is a unimodular linear isomorphism which maps $\Delta'$ on $\Delta_1*\ldots*\Delta_d$. Now the following observation is important. If an element $\sum_{i=1}^d\left(\mu_{i1}e_i\times u_{i1}+\mu_{i2}e_i\times u_{i2}\right)\in\Z^{2d}$ for rational numbers $\mu_{ij}$, then $\mu_{i1}+\mu_{i2}\in\Z$. Hence by reordering the $\lambda_i$ we may assume that $\lambda_{2i-1}+\lambda_{2i}\in\Z$ for $i=1,\ldots,d$. Thus $\lambda_{2i-1}+\lambda_{2i}\ge1$ for $i=1,\ldots,d$, \ie $\sum_i\lambda_i\ge d$.
\end{proof}

\begin{proof}[Proof of Theorem \ref{thm: Cyclic Quotient Singularities}]
 The direction (2)$\Rightarrow$(1) easily follows by the general fact that $\{x\}+\{-x\}=1$ for all real numbers $x$ not being an integer.

 Next assume that $(a_1/n,\ldots,a_{2d}/n)$ is an isolated cyclic quotient singularity with minimal $\log$--discrepancy$\ge d$, \ie $\gcd(a_i,n)=1$ and for all $t=1,\ldots,n-1$, $\sum_{i=1}^{2d}\{ta_i/n\}\ge d$. Then as above it follows that $\sum_{i=1}^{2d}\{ta_i/n\}=d$. Indeed assume by contradiction that for a $t=1,\ldots,n-1$ it is $\sum_{i=1}^{2d}\{ta_i/n\}>d$. Then
 \[
  \sum_{i=1}^{2d}\fractional{\frac{(n-t)a_i}{n}}=\sum_{i=1}^{2d}\fractional{\frac{-ta_i}{n}}=2d-\sum_{i=1}^{2d}\fractional{\frac{ta_i}{n}}<d
 \]
 contradicting the fact that the minimal $\log$--discrepancy$\ge d$.
 Then the assertion follows by proposition \ref{prop:BernoulliFunctions}.
\end{proof}


\bibliographystyle{amsalpha} 
\bibliography{biblio}

\end{document}